\title[Quantum Cohomology of Fano Bundles]{A Reconstruction Theorem for Quantum Cohomology of Fano Bundles on $\PP^n$}
\newcommand{\PP}{\mathbb{P}}
\newcommand{\RR}{\mathbb{R}}
\newcommand{\OO}{\mathcal{O}}
\newcommand{\C}{\mathbb{C}}
\newcommand{\z}{\mathbb{Z}}
\newcommand{\ev}{\mathrm{ev}}
\newcommand{\pf}{\mathrm{pf}}
\newcommand{\Ind}[1]{\mathrm{I}\left({#1}\right)}
\newcommand{\V}[1]{V_{({#1})}}
\newcommand{\term}[1]{\mathrm{Term}\left({#1}\right)}
\renewcommand{\deg}{\operatorname{deg}}
\newcommand{\PD}{\operatorname{PD}}
\long\def\blankfootnotetext#1{\begingroup\def\thefootnote{\fnsymbol{footnote}}\footnotetext{#1}\endgroup}
\newtheorem{theorem}{Theorem}
\newtheorem{lem}{Lemma}
\newtheorem{cor}{Corollary}
\theoremstyle{definition}
\newtheorem{ass}{Assumption}
\newtheorem{rem}{Remark}
\begin{document}
\author[A. ~Strangeway]{Andrew Strangeway}
\address{Department of Mathematics\\Imperial College London\\London, SW$7$\ $2$AZ\\UK}
\email{a.strangeway09@imperial.ac.uk}
\blankfootnotetext{2010 \emph{Mathematics Subject Classification}: 14N35 (Primary); 53D45, 14J45  (Secondary).}
\begin{abstract}
We present a reconstruction theorem for Fano vector bundles on projective space which recovers the small quantum cohomology for the projectivisation of the bundle from a small number of low-degree Gromov--Witten invariants. 
We provide an extended example in which we calculate the quantum cohomology of a certain Fano \mbox{9-fold} and deduce from this, using the quantum Lefschetz theorem, the quantum period sequence for a Fano \mbox{3-fold} of Picard rank 2 and degree 24. This example is new, and is important for the Fanosearch programme.
\end{abstract}

\maketitle
\section{Introduction}
We say that a vector bundle $E$ is Fano if the total space of the projectivisation, $X:=\PP(E)$, is a Fano manifold. In this paper we study \hbox{rank-$r$} Fano vector bundles on \hbox{$n$-dimensional} projective space $\PP^n$.
It is well known that the ring $H^\bullet(X;\z)$ has two integer generating classes, $p$ and $\xi_E$, given by the tautological bundle on $\PP^n$ and the relative tautological bundle on the projectivisation $\PP(E)$, respectively.
These are canonical up to a choice of normalisation of $E$.

The Mori cone of $X$ has precisely two extremal rays $R_1, R_2\subset\overline{NE}_X\subset H_2(X;\RR)$.
We make the assumption on the bundle $E$ ({\bfseries Assumption~1}) that the primitive generators of the rays $R_1$, $R_2$ form an integer basis of $H_2(X;\z)$.
We label this basis by $A_1$, $A_2$.

Without loss of generality we may take $A_2$ to be the class of a line in the fibre. 
The representative for $A_1$ is less clear and indeed it is possible that it may not be represented by an algebraic curve (cf.~\cite[Question~3~and~Example~4]{JK_example}. 
It is not clear whether for Fano bundles Assumption~1 always holds, although there exist counter examples for Fano varieties in general.
 
It follows from Assumption~1 that we can normalise the bundle such that the basis $A_1,A_2$ of $H_2(X;\z)$ is Poincar\'e dual to the basis $\widehat{p},\widehat{\xi_E}$ of $H^2(X;\z)$. 
Here $\widehat{\alpha}$ denotes the dual of $\alpha\in H^\bullet(X)$  with respect to the pairing $(\alpha,\beta)=\int_{X}{\alpha\cup\beta}$ on cohomology. 
In the following we shall assume that $E$ is normalised in this manner and denote the tautological class by $\xi$ without reference to the corresponding bundle.

Given Assumption~1 we determine the Gromov--Witten invariants with target class represented by curves in the fibre of the bundle map.
This generalises some early results of Qin and Ruan~\cite{Qin--Ruan}.
We then impose an additional assumption on our normalised bundle ({\bfseries{Assumption~2}}): that $r+1+c_1>0$, where $c_1(E)=c_1p$.
This assumption is essentially an assumption on the rank of the bundle, and holds automatically in the case $r>n$. 
Assumption~2 insures that quantum multiplication of two classes $\alpha,\beta \in H^\bullet(X;\z)$ with $\deg(\alpha)+ \deg(\beta) \leq n$ has no quantum corrections determined by Gromow--Witten invariants with class $aA_1+bA_2$, with $a$ and $b$ both non-zero.

We show ({\bfseries Theorem \ref{thm:reconstruction}}) that for a Fano bundle $E$ satisfying Assumptions~1~\&~2 the small quantum cohomology of $X$ is determined by the Gromov--Witten invariants with class $A\in H_2(X)$ of degree $-K\cdot A\leq n+1$. 
This gives us good control over the invariants required to determine the quantum cohomology.
To place such a condition seems reasonable, as evidence suggests that low rank Fano bundles are split \cite{FanoBundlesSplit}. 
The projectivisation of a split bundle on $\PP^n$ is toric, so the quantum cohomology is determined by existing theory of Givental \cite{Givental_toric}.

In the second part of the paper we provide an extended example, using Theorem~\ref{thm:reconstruction} to calculate the quantum cohomology for the Fano {9-fold} $\PP(\wedge^2\Omega_{\PP^4})$, the projectivisation of second wedge of the cotangent bundle on $\PP^4$.
From the quantum multiplication data we produce the small $J$-function, following Guest~\cite{Guest:intro1, Guest:intro2}. We observe in the $J$-function some tantalising traces of modularity, in the form of the Ap\'ery numbers.

We apply the quantum Lefschetz theorem \cite{Coates--Givental} to the result for the \hbox{9-fold} to compute the quantum period for the \hbox{rank-2} Fano \hbox{3-fold} \hbox{No.~17} in the Mori--Mukai list \cite{Mori--Mukai}, which can be given as a complete intersection in the total space of the projectivised bundle.
This result is important in the classification of Fano \hbox{3-folds} as carried out in the Fanosearch programme~\cite{E6}.

While preparing the first draft of this paper the method we describe here was the only way to obtain the quantum cohomology of this Fano 3-fold.  More recently Coates--Corti\footnote{Being scooped by your own supervisors is an awkward experience.}--Galkin--Kasprzyk have shown that the quantum period of this Fano 3-fold can be computed using Abelian/Non-Abelian correspondence~\cite{Abelian_NonAbelian}.  Nonetheless the method presented here is less taxing than the alternative approach, and the methods presented in this paper also apply to more general Fano bundles.
\subsection*{Acknowledgments}
This paper is the outcome of work conducted for my PhD thesis at Imperial College London, funded by EPSRC. I would like to thank my supervisors, Alessio Corti and Tom Coates, for their advice and support.
I would also like to thank J\'anos Koll\'ar for pointing out examples of smooth projective varieties $M$, on which there exist classes $A\in H_2(M;\z))$ where, for $m>1$, $mA$ is the class of a effective curve, while $A$ is not.
\section{Notation and Assumptions}\label{sec:note}
Throughout we will use Fulton's convention for projectivised bundles; we regard $\PP(E)$ as  the bundle of one dimensional linear subspaces in the fibres of $E$, not one dimensional quotients.
Since we work exclusively with cohomology in even degree, we say that $\alpha$ has degree $\frac{n}{2}$ if $\alpha \in H^n$.

\subsection{Fano Bundles and Extremal Rays}
A vector bundle $E$ is defined to be Fano if its projectivisation, $\PP(E)$, is a Fano manifold.
Let $E\rightarrow\PP^n$ be a Fano bundle of rank $r$. Let $p=c_1(\OO_{\PP^n}(1))$ be the hyperplane class on $\PP^n$, $\xi_E=c_1(\OO_{\PP(E)}(1))$ the relative hyperplane class on $\PP(E)$. We shall write $c_i(E)=c_ip^i$ for the Chern classes of $E$.

Let $X=\PP(E)$, we have that the anti-canonical class is given by
$$-K_X=(n+1)p+c_1(E)+r\xi_E$$
Note that while it might appear that $-K_X$ depends on the normalisation of $E$, one can easily check that for $E'=E\otimes \OO (d)$, $c_1(E')=c_1(E)+dp$, while $\xi_E'=\xi_{E}-dp$.

We fix a basis for $H^\bullet(X;\z)$, $\phi_1, \dots, \phi_{r(n+1)}$, generated as a ring by $\mathbf{1}$, $p$ and $\xi_E$.
This basis is given lexicographic ordering where we take $p$ before $\xi_E$.
Let $(.,.)$ denote the intersection pairing on cohomology, so that $(\alpha,\beta)=\int_{M}{\alpha\cup\beta}$.
We denote by $\widehat{\alpha}$ the dual to $\alpha$ defined by this paring and denote by $\phi^i=\widehat{\phi_i}$ the basis dual to $\phi_i$.
We will regularly denote the cup product of cohomology classes implicitly.

As $X$ is the projectivisation of a vector bundle bundle over projective space, the Mori cone of $X$, $\overline{NE}_X$, has exactly two extremal rays $R_1,R_2$ (pg. 25 in \cite{Clemens--Kollar--Mori}). 
We may take primitive generators of the rays  $A_1, A_2 \in H_2(X;\z)$ such that $R_1=\RR_+A_1$ and $R_2=\RR_+A_2$. 
Up to a choice of ordering $A_2$ is represented by a line in the fibre of $P$ : $A_2=\PD(\widehat{\xi_E})$. 
The form that $A_1$ takes in general is less clear and, as we noted in the introduction, it may not be representable as an algebraic curve. We shall make the following assumption on the generators of the extremal rays of $X$:

\begin{ass}
We assume the following equivalent conditions:
\begin{itemize}
\item $A_1,A_2\in H_2(X;\z)$ form an integral basis.
\item $p\cdot A_1=1$ 
\end{itemize}
\end{ass}

Assumption~1 implies that $A_1=\PD\left[p^{n-1}\xi_E^{r-1}+(k+c_1)p^n\xi_E^{r-2}\right]$, for some integer $k$.
A sufficient condition for Assumption 1 to hold is if the class $A_1$ is realised by rational curves corresponding to lines in the base $\PP^n$.
This occurs when $A_1$ is the class of a section of $P|_{\ell}$ for $\ell\subset \PP^n$ a line. 
Note that $\ell$ need not be a generic line: unless the bundle is uniform it is reasonable to expect that any line realising $A_1$ will be some jumping line for the vector bundle $E$~\cite{Ancona--Maggesi}.
In greater generality one might expect that $A_1$ would be the class of a section of $P|_{C}$, for some curve $C$ of degree $> 1$ i.e. corresponding to a jumping conic or higher degree rational curve. This would contravene Assumption~1. At present we do not have examples of such a situation, though we have no general statement to rule it out.

Given Assumption 1 it trivially follows that:
\begin{align*}
p\cdot A_1&=1, &p \cdot A_2=0\\
\xi_E \cdot A_1&=k, &\xi_E \cdot A_2=1
\end{align*}

By choosing the basis $p, (\xi-kp)$, this product becomes diagonal.  This is equivalent to twisting  $E$ by $\OO(k)$, which does not change $\PP(E)$.  From here on in we assume that $E$ is normalised in this way so that the product is diagonalised, that is:
\begin{align*}
p \cdot A_1 &=1, &p \cdot A_2=0\\
\xi_E \cdot A_1&=0, &\xi_E \cdot A_2=1
\end{align*}

Under these conditions we are therefore free to assume that $E$ is normalised such that $A_1=\PD(\widehat{p})$ and $A_2=\PD(\widehat{\xi_E})$, and we do so in what follows, dropping the subscript $E$ on $\xi$.
Note that this normalisation implies that $\xi$ is nef while $\xi-p$ is not.

Since $E$ is a Fano bundle, $-K_X=(n+1+c_1)p+r\xi$ is ample. We therefore have
\begin{equation}\label{eqn:positiveness}
-K_x\cdot A_1=n+1+c_1>0. \end{equation}
With this normalisation fixed, we will later also make the assumption that:

\begin{ass}
$E$ satisfies $r+1+c_1>0$.
\end{ass}
This assumption is essentially a condition on the rank of the bundle and is fulfilled if $r>n$ (combining this with \eqref{eqn:positiveness}, Assumption 2 follows easily).
The purpose of this assumption will be made more transparent in the following section as it pertains to the degree of certain quantum corrections.
 
We shall denote by $P$ and $\Xi$ the extremal contractions given by the linear systems induced by $p$ and $\xi$ respectively. 
\[
\def\objectstyle{\displaystyle}
\xymatrix{
  X \ar[r]^{\Xi\quad}\ar[d]_P  & Y \\
  \PP^n &  }
\]

\subsection{Gromov--Witten and Quantum Cohomology}
We briefly review some aspects of Gromov--Witten theory to fix notation. 
In the following we use the fact that $A_1,A_2$ form an integer basis for $H_2(X;\z)$.

Let $X_{0,n,aA_1+bA_2}$ denote the moduli space of n-pointed genus-zero stable maps to $X$ of image class $aA_1+bA_2$ from curves of genus $0$ with $n$ marked points \cite{Kontsevich--Manin,Fulton--Pandharipande}. 
There are evaluation maps $\ev_i:X_{0,n,aA_1+bA_2}\rightarrow X, 1\leq i\leq n$, and, given cohomology classes $\alpha_1,\alpha_n\in H^\bullet(X)$, we define genus zero Gromov--Witten invariants as

$$\langle\alpha_1,..,\alpha_n\rangle_{0,n,aA_1+bA_2}^X:=\int_{[X_{0,n,aA_1+bA_2}]^{vir}}{\ev_1^*\alpha_1\wedge..\wedge \ev_n^*\alpha_n}$$
where $[X_{0,n,aA_1+bA_2}]^{vir}$ is the virtual fundamental class of $X_{0,n,aA_1+bA_2}$\cite{Li--Tian, Behrend--Fantechi, Behrend}. Recall that the virtual dimension of $X_{0,n,aA_1+bA_2}$ is \[\text{dim}_{\text{vir}}(X_{0,n,aA_1+bA_2})=\text{dim}(X)-3+n-K\cdot(aA_1+bA_2).\]

The small quantum product on a manifold $M$ is a deformation of the usual (classical) product in cohomology.
The deformation takes the form of quantum corrections, which are governed by genus-zero Gromov--Witten invariants. 

Let $\Lambda:=\C [H^2(X;\z)]$, an element of which is a finite sum $\sum_{d\in H^2(X;\z)}{\lambda_d q^d}$, with symbols multiplying as follows, ${q^{d_1} q^{d_2}=q^{d_1+d_2}}$. 
We let $q_1=q^{A_1}$ and $q_2=q^{A_2}$ be the elements of $\Lambda$ associated to $A_1$ and $A_2$ corrections respectively.

We consider the small quantum product as the following operation:
$$\star:H^\bullet(X;\C) \times H^\bullet(X;\C)\rightarrow H^\bullet(X;\C)\otimes \Lambda$$
with
$$\alpha\star \beta=\sum_{i=1}^{(n+1)r}{\sum_{a,b\geq0}{\langle\alpha,\beta,\phi_i\rangle_{0,3,aA_1+bA_2}^X\phi^i q_1^aq_2^b}}$$

We define $d_1=\mathrm{deg}(q_1)=-K\cdot A_1=n+1+c_1(E)$ and $d_2=\mathrm{deg}(q_2)=-K\cdot A_2=r$.  
With this definition, quantum product makes $H^\bullet(X,\Lambda)$ into a graded ring.
This fact, which is a simple consequence of the virtual dimension of $X_{0,n,aA_1+bA_2}$,  constrains the degree of image class for the Gromov--Witten invariants associated to quantum corrections based on the degree of the classes being multiplied.
We will often refer to the degree preserving nature of quantum multiplication by little more than ``for degree reasons". 
To calculate small quantum multiplication by $p$ and $\xi$ we need only consider 2-point invariants, due to the divisor axiom \cite{Behrend}. 

Note that Assumption 2 implies that $d_1+d_2>n$, so the quantum multiplication of two classes $\alpha,\beta \in H^\bullet(X;\z)$ with $\deg(\alpha)+ \deg(\beta) \leq n$ has no quantum corrections determined by Gromow--Witten invariants with class $aA_1+bA_2$, with $a$ and $b$ both non-zero.

\section {Reconstruction Theorem for Fano Bundles}

\begin{lem}\label{lem:zeroone}
Let $X=\PP(E)$  for $E\rightarrow Y$, a Fano bundle (not necessarily on $\PP^n$) of rank $r$ and let $Y$ be smooth of dimension $n$. Let $\xi$ be the relative hyperplane class on $X$, $\pi:X\rightarrow Y$ the induced bundle map, and $A=\PD(\widehat{\xi})$ the extremal curve corresponding to lines in the fibre of $\pi$. The Gromov--Witten invariants of type $\langle\alpha,\beta\rangle^X_{0,2,kA}$ vanish for $k\geq2$. Furthermore $\langle\alpha,\beta\rangle^X_{0,2,A}=\pi_\ast\alpha\cdot\pi_\ast\beta$ \end{lem}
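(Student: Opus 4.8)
The plan is to exploit that every stable map of class $kA$ is contracted by $\pi$. Since $\pi_\ast A=0$, the composition with $\pi$ of any stable map of class $kA$ is constant, so such a map factors through a single fibre $\PP(E_y)\cong\PP^{r-1}$ (assume $r\geq 2$; for $r=1$ there is no such class $A$). In particular the two evaluation maps agree after composition with $\pi$, so the total evaluation map factors through the fibre product:
\[
(\ev_1,\ev_2)\colon X_{0,2,kA}\longrightarrow Z:=X\times_Y X .
\]
I would then record two dimensions. As $A$ is a line in the fibre, $-K_X\cdot A=c_1(T_{X/Y})\cdot A=r$ (here $\pi^\ast K_Y\cdot A=0$, and $c_1(T_{\PP^{r-1}})$ is $r$ times the hyperplane class), so the virtual dimension of $X_{0,2,kA}$ equals $\dim X-1+kr=n+r-2+kr$, whereas $\dim Z=2\dim X-\dim Y=n+2r-2$.

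For $k\geq 2$ the vanishing is then purely a matter of degree. The class $(\ev_1,\ev_2)_\ast[X_{0,2,kA}]^{\mathrm{vir}}$ lies in $H_{2(n+r-2+kr)}(Z)$, and $n+r-2+kr>n+2r-2=\dim_{\C}Z$ exactly when $k\geq 2$, so that homology group vanishes. Hence
\[
\langle\alpha,\beta\rangle^X_{0,2,kA}=\int_Z (\ev_1,\ev_2)_\ast[X_{0,2,kA}]^{\mathrm{vir}}\cup(\mathrm{pr}_1^\ast\alpha\cup\mathrm{pr}_2^\ast\beta)=0 ,
\]
and one sees this step uses nothing beyond the fact that $kA$ is $\pi$-contracted.

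For $k=1$ I would first identify the moduli space: $X_{0,2,A}$ is the blow-up of $Z$ along the relative diagonal $\Delta_{X/Y}$, the relativisation over $Y$ of the classical identification $\overline{M}_{0,2}(\PP^{r-1},\ell)=\mathrm{Bl}_{\Delta}(\PP^{r-1}\times\PP^{r-1})$ with $(\ev_1,\ev_2)$ the blow-down. What I actually need from this is that $X_{0,2,A}$ is smooth of dimension $n+2r-2$ — equal to its virtual dimension, so $[X_{0,2,A}]^{\mathrm{vir}}$ is the fundamental class — and that $(\ev_1,\ev_2)\colon X_{0,2,A}\to Z$ is birational, hence of degree one. (Smoothness in the expected dimension also follows directly from convexity of the fibres: restricting $0\to T_{X/Y}\to TX\to\pi^\ast TY\to 0$ to a contracted genus-zero $f\colon C\to X$ gives $H^1(C,f^\ast TX)=H^1(C,f^\ast T_{X/Y})=0$; birationality is clear since two distinct points of a fibre lie on a unique line.) Therefore $\langle\alpha,\beta\rangle^X_{0,2,A}=\int_Z\mathrm{pr}_1^\ast\alpha\cup\mathrm{pr}_2^\ast\beta$. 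Since $Z=X\times_Y X$ is the fibre square of the smooth projective morphism $\pi$, proper base change gives $\mathrm{pr}_{1\ast}\mathrm{pr}_2^\ast\beta=\pi^\ast\pi_\ast\beta$, and two applications of the projection formula yield
\[
\int_Z\mathrm{pr}_1^\ast\alpha\cup\mathrm{pr}_2^\ast\beta=\int_Y\pi_\ast(\alpha\cup\pi^\ast\pi_\ast\beta)=\int_Y\pi_\ast\alpha\cup\pi_\ast\beta=\pi_\ast\alpha\cdot\pi_\ast\beta .
\]

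The main obstacle I anticipate is the $k=1$ case, specifically checking that $(\ev_1,\ev_2)$ really has degree one and that the virtual class is the fundamental class. The blow-up model settles both, but it is enough to verify smoothness of $X_{0,2,A}$ in the expected dimension (via convexity of $\PP^{r-1}$) together with birationality of $(\ev_1,\ev_2)$ over the dense locus of pairs of distinct points in a common fibre; once that is in hand, the base-change and projection-formula manipulations are routine because $X$, $Y$ and $Z$ are all smooth and projective.
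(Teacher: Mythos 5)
Your argument is correct, and its skeleton is the same as the paper's: for $k\geq 2$ a dimension count, and for $k=1$ the geometric fact that two points in a common fibre lie on a unique line, converted into an intersection number on $Y$ after pushing down. Where you differ is in the implementation, and your version is the more rigorous of the two. The paper argues for $k\geq 2$ that generic cycles representing $\PD(\alpha)$ and $\PD(\beta)$ cannot meet a common fibre (since $s+t<n$), and concludes ``no curves, hence the invariant vanishes''; you instead observe that every class-$kA$ stable map is $\pi$-contracted, so $(\ev_1,\ev_2)$ factors through $Z=X\times_Y X$, and the pushed-forward virtual class lies in $H_{2(n+r-2+kr)}(Z)$, which is zero precisely when $k\geq2$ since $\dim_{\C}Z=n+2r-2$ --- this sidesteps any discussion of genericity and of how the virtual class interacts with the enumerative statement. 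For $k=1$ the paper cites the classical two-point invariants of projective space and asserts the count equals the intersection product after pushdown; you make this precise by checking unobstructedness of $X_{0,2,A}$ (convexity of the fibre direction plus triviality of $\pi^\ast T_Y$ on contracted curves, so the virtual class is the fundamental class), birationality of $(\ev_1,\ev_2)$ onto $Z$ (the blow-up of the relative diagonal), and then the base-change/projection-formula computation $\int_Z\mathrm{pr}_1^\ast\alpha\cup\mathrm{pr}_2^\ast\beta=\int_Y\pi_\ast\alpha\cup\pi_\ast\beta$. The paper's proof is shorter and leans on known facts about $\PP^{r-1}$; yours supplies exactly the virtual-class and degree-one justifications that the paper leaves implicit, at the cost of identifying the moduli space explicitly.
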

\begin{proof}
This result generalises Lemmas~3.6 and 3.7 of \cite{Qin--Ruan}.

We demonstrate that for $k\geq 2$ there do not exist any rational curves of class $kA$ which satisfy the intersection properties given by $\alpha$ and $\beta$. 
Connected curves of the class $kA$ are restricted to live in a single fibre of $\pi$. 
Since we require that the curve intersects $\PD(\alpha)$ and $\PD(\beta)$, such a curve may only exist if $\PD(\alpha)$ and $\PD(\beta)$ both intersect a common fibre of $\pi$. 
We provide a dimension counting argument to show that no such common fibre exists.

The virtual dimension of $X_{0,2,kA}$ is 
$$\mathrm{dim}_vir(X_{0,2,kA})=\mathrm{dim}(X)-3 +2 +k\deg(q^A)=n+r-2+kr$$
If $k\geq 2$ then, for degree reasons, we only get non-zero invariants if $\deg(\alpha) +\deg(\beta)= n+r-2+kr\geq n+r-2+2r$. 

Letting $s$ and $t$ be the dimensions of $\PD(\alpha)$ and $\PD(\beta)$ respectively, we have that $s+t\leq n+r -2r< n$. 
This implies that $\PD(\alpha)$ and $\PD(\beta)$ do not intersect in a common fibre of $\pi$, in particular $\pi_\ast \PD(\alpha)$ and $\pi_\ast \PD(\beta)$ do not intersect generically in $Y$. 
Since there are no rational curves with the correct intersection properties, the associated Gromov--Witten invariants vanishes.

The Gromov--Witten invariants $\langle\alpha,\beta\rangle_{0,2,A}^X$ count genuine lines in the fibres of $\pi$ which intersect sufficiently generic cycles representing the classes $\PD(\alpha)$ and $\PD(\beta)$.
The fibres are projective space and it is well known that the only non-zero, two-point invariants for projective space encode the fact that there is exactly one line between two points, see e.g.\cite{Kock--Vainsencher}.
So we can rephrase the question: we wish to count fibres of $P$ which contain a point of the cycle representing $\PD(\alpha)$ and a point of the cycle representing $\PD(\beta)$. 
This can be calculated via ordinary intersection product after pushdown by $P$, as stated.

\end{proof}

Note that if $\alpha,\beta$ are elements of the basis $\{\phi_i\}$
\[
\langle\alpha,\beta\rangle_{0,2,A_2} = \left\{
        \begin{array}{ll}
            1 & \quad \text{if } \alpha=p^{n-l}\xi^{r-1},\:\beta=p^n\xi^{r-1},\: 1\leq l\leq n\\
            0 & \quad \text{otherwise}
        \end{array}
    \right.
\]

We prove two reconstruction lemmas, which together allow us to produce the full quantum cohomology for $X$ from a small number of input Gromov--Witten invariants. 
Note that throughout we are heavily reliant upon the assumption that the product between divisor classes and extremal curve classes is diagonal.
 The `$\xi$-Lemma' tells us that if we have the quantum multiplication of a class by $p$ then we know for free the multiplication of the same class by $\xi$.
 The `$p$-Lemma' says that if we know $\xi \star p^k\xi^l$, then we get $p \star p^{k-1}\xi^{l+1}$.
  Note that once we have determined quantum multiplication by divisor classes we have determined the entirety of quantum cohomology, so we restrict our interest to quantum multiplication by divisors. 
Since quantum multiplication is distributive, we only need consider multiplication of basis elements.

\begin{lem}[$\xi$-Lemma]\label{lem:xi}

Assume that for some $i,k$, with $i\geq k$, $p\star p^k\xi^{i-k}$ is known. 
We can then calculate $\xi\star p^k\xi^{i-k}$ with no geometric (i.e. moduli space) calculation. 
Put differently we already have sufficient Gromov--Witten invariants to calculate the quantum corrections in  $\xi\star p^k\xi^{i-k}$.
\begin{proof}
For degree reasons the quantum corrections in $p\star p^k\xi^{d-k}$ are determined by (and hence determine) all Gromov--Witten invariants of the form;

$$ \langle\alpha,\beta\rangle_{0,2,aA_1+bA_2}^X$$
with $ad_1+bd_2\leq i+1$ and $a\geq 1$.

Likewise, the quantum corrections for $\xi\star p^k\xi^{i-k}$ are determined by invariants of the form
$$ \langle\alpha,\beta\rangle_{0,2,aA_1+bA_2}^X$$
with $ad_1+bd_2\leq i+1$ and $b\geq 1$.

It is clear that the only Gromov--Witten invariants required to determine the quantum corrections in $\xi\star p^k\xi^{i-k}$, which are not already determined by $p\star p^k\xi^{i-k}$, are $ \langle\alpha,\beta\rangle_{0,2,A_2}^X$, which are given by Lemma~\ref{lem:zeroone}

\end{proof}

\end{lem}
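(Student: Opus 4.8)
The plan is to compare, invariant by invariant, the Gromov--Witten data that controls the two products $p\star p^k\xi^{i-k}$ and $\xi\star p^k\xi^{i-k}$, and to exhibit the invariants needed for the second but not the first as precisely those computed by Lemma~\ref{lem:zeroone}.

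First I would pin down exactly which invariants ``are'' the product $p\star p^k\xi^{i-k}$. Multiplication by the divisor class $p$ involves only $3$-point invariants with one insertion equal to $p$, so the divisor axiom rewrites the coefficient of $\phi^j q_1^a q_2^b$ as $\bigl(p\cdot(aA_1+bA_2)\bigr)\,\langle p^k\xi^{i-k},\phi_j\rangle^X_{0,2,aA_1+bA_2}$. Under the diagonal normalisation $p\cdot A_1=1$, $p\cdot A_2=0$ this weight equals $a$, so only classes with $a\geq 1$ can contribute; and since the quantum ring is graded, the degree of the output class, $\deg\phi^j=\deg p+\deg(p^k\xi^{i-k})-ad_1-bd_2=i+1-ad_1-bd_2$, must be $\geq 0$, forcing $ad_1+bd_2\leq i+1$. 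Conversely, when $a\geq 1$ the weight $a$ is invertible, so reading off the coefficient of $\phi^j q_1^a q_2^b$ recovers the invariant. Thus ``knowing $p\star p^k\xi^{i-k}$'' is the same as knowing every $\langle p^k\xi^{i-k},\phi_j\rangle^X_{0,2,aA_1+bA_2}$ with $a\geq 1$ and $ad_1+bd_2\leq i+1$ (together with the classical cup product, which needs no enumerative input).

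I would then run the identical analysis for $\xi\star p^k\xi^{i-k}$. The divisor axiom now contributes the weight $\xi\cdot(aA_1+bA_2)=b$, using $\xi\cdot A_1=0$, $\xi\cdot A_2=1$, and the grading bound is the \emph{same} inequality $ad_1+bd_2\leq i+1$ since $\deg(\xi\,p^k\xi^{i-k})=i+1$ as well. Hence this product is governed by the invariants $\langle p^k\xi^{i-k},\phi_j\rangle^X_{0,2,aA_1+bA_2}$ with $b\geq 1$ and $ad_1+bd_2\leq i+1$, and the first insertion $p^k\xi^{i-k}$ is the same monomial as in the previous problem. Comparing the two index sets, every such invariant with both $a\geq 1$ and $b\geq 1$ is already furnished by $p\star p^k\xi^{i-k}$, so the only invariants left to determine are those with $a=0$, $b\geq 1$ and $bd_2\leq i+1$, that is, of class $bA_2$. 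By Lemma~\ref{lem:zeroone} these vanish for $b\geq 2$ and equal $P_\ast(p^k\xi^{i-k})\cdot P_\ast\phi_j$ for $b=1$, both ordinary (co)homological computations on $\PP^n$. Reassembling the weighted coefficients then yields $\xi\star p^k\xi^{i-k}$ with no moduli-space calculation.

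The step I expect to need the most care is the bookkeeping behind the clean ``$a\geq 1$ versus $b\geq 1$'' dichotomy: one has to check that the admissible bidegrees are cut out by the \emph{same} bound $ad_1+bd_2\leq i+1$ for both products, and that the divisor-axiom weights are exactly $a$ and $b$. This is where the diagonal normalisation does all the work: without $p\cdot A_1=1,\ p\cdot A_2=0,\ \xi\cdot A_1=0,\ \xi\cdot A_2=1$ the two products would involve overlapping but genuinely different families of invariants and the reduction would break down.
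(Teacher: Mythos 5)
Your proposal is correct and follows essentially the same route as the paper's proof: both compare the index sets of two-point invariants $\langle\,\cdot\,,\cdot\,\rangle^X_{0,2,aA_1+bA_2}$ governing the two products (cut out by $ad_1+bd_2\le i+1$ with $a\ge 1$ versus $b\ge 1$, via the divisor axiom and the diagonal normalisation) and observe that the only new invariants are those of class $bA_2$, which Lemma~\ref{lem:zeroone} supplies. Your write-up simply makes the divisor-axiom weights and the invertibility bookkeeping more explicit than the paper does.
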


\begin{lem}[$p$-Lemma]\label{lem:p}

Assume all quantum multiplication by divisor classes of classes of degree $\leq i-1$ is known. 
Further, assume that $\xi \star p^{k+1}\xi^{i-k-1}$, $i-k-1\geq 1$ is known. 
Then we can calculate $p \star p^{k}\xi^{i-k}$ with no geometric calculation.
Note that for $i\geq n+1$ and $k=n$ , $\xi \star p^{n+1}\xi^{i-n-1}=0$, since $p^{n+1}=0$
\end{lem}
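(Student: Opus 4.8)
The plan is to derive the $p$-Lemma as a purely formal consequence of the commutativity and associativity of the small quantum product together with the stated inductive hypotheses; no further moduli-space computation should be needed, exactly as in the $\xi$-Lemma. Write $\gamma := p^k\xi^{i-k-1}$, a class of degree $i-1$, chosen so that the classical products are $\xi\cdot\gamma = p^k\xi^{i-k}$ and $p\cdot\gamma = p^{k+1}\xi^{i-k-1}$. Since $\big(H^\bullet(X;\C)\otimes\Lambda,\star\big)$ is a commutative associative ring, quantum multiplication by $p$ and by $\xi$ commute, so
\[
p\star\big(\xi\star\gamma\big)\;=\;(p\star\xi)\star\gamma\;=\;(\xi\star p)\star\gamma\;=\;\xi\star\big(p\star\gamma\big).
\]
I would expand both sides of this identity in terms of data that is already available and then solve for $p\star(p^k\xi^{i-k})$.

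On the left, $\xi\star\gamma$ is known since $\deg\gamma=i-1$, and it has the form $\xi\star\gamma = p^k\xi^{i-k} + \sum_{(a,b)\neq(0,0)}c_{a,b}\,q_1^aq_2^b$ with $\deg c_{a,b} = i - ad_1 - bd_2 \le i-1$, using $d_1 = n+1+c_1\ge 1$ (by \eqref{eqn:positiveness}) and $d_2 = r\ge 1$. Hence, by $\Lambda$-linearity,
\[
p\star(\xi\star\gamma)\;=\;p\star\big(p^k\xi^{i-k}\big)\;+\;\sum_{(a,b)\neq(0,0)}\big(p\star c_{a,b}\big)\,q_1^aq_2^b,
\]
and every $p\star c_{a,b}$ is known since $\deg c_{a,b}\le i-1$ (quantum multiplication being $\C$-linear, knowing it on basis elements of degree $\le i-1$ suffices). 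Symmetrically, $p\star\gamma = p^{k+1}\xi^{i-k-1} + \sum_{(a,b)\neq(0,0)}c'_{a,b}\,q_1^aq_2^b$ with $\deg c'_{a,b}\le i-1$, so
\[
\xi\star(p\star\gamma)\;=\;\xi\star\big(p^{k+1}\xi^{i-k-1}\big)\;+\;\sum_{(a,b)\neq(0,0)}\big(\xi\star c'_{a,b}\big)\,q_1^aq_2^b,
\]
where the first summand is precisely the quantity assumed known in the hypothesis --- equal to $0$ when $k=n$, since then $p^{k+1}\xi^{i-k-1}=0$ and $\xi\star 0 = 0$ --- and each $\xi\star c'_{a,b}$ is known for the same degree reason. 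Equating the two expansions isolates the single unknown:
\[
p\star\big(p^k\xi^{i-k}\big)\;=\;\xi\star\big(p^{k+1}\xi^{i-k-1}\big)\;+\;\sum_{(a,b)\neq(0,0)}\big(\xi\star c'_{a,b}-p\star c_{a,b}\big)\,q_1^aq_2^b,
\]
whose right-hand side is entirely determined by known data, which proves the lemma.

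The computation is essentially bookkeeping, so the one point I expect to require care is the degree accounting: one must check that every correction class $c_{a,b}$, $c'_{a,b}$ really has degree $\le i-1$ --- this is where $d_1,d_2\ge1$ is used --- so that the inductive hypothesis of the lemma applies to it, and that the only summand of cohomological degree $i$ surviving on the right, namely $\xi\star(p^{k+1}\xi^{i-k-1})$, is exactly the extra input assumed. The restriction $i-k-1\ge 1$ should enter precisely by keeping this extra input among the products already available at the relevant stage of the reconstruction scheme, the degenerate case $k=n$ being absorbed instead by the vanishing $p^{n+1}=0$. Note also that Lemma~\ref{lem:zeroone} is not invoked directly here: the fibre-line corrections it governs are already packaged inside $\xi\star\gamma$ and $p\star\gamma$, which the hypothesis supplies.
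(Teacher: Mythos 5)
Your proposal is correct and follows essentially the same route as the paper: setting $\gamma=p^k\xi^{i-k-1}$, using commutativity and associativity to equate $p\star(\xi\star\gamma)$ with $\xi\star(p\star\gamma)$, and isolating $p\star p^k\xi^{i-k}$ after noting that all quantum corrections have degree $\le i-1$ (since $d_1,d_2\ge 1$) and so their divisor multiplications are known by hypothesis. Your version is in fact slightly more careful than the paper's, which writes the corrections loosely as $f_1q_1$ and $f_2q_2$ where you correctly allow general $q_1^aq_2^b$ terms.
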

\begin{proof}
Since quantum cohomology is both associative and commutative, we can use the knowledge of multiplication in lower degree to make the following manipulations:

\begin{align*}
p\star p^k \xi^{i-k}=& p\star( \xi\star p^k\xi^{i-k-1}-f_2q_2)\\
 &=\xi \star p^{k+1} \xi^{i-k-1}+\xi \star f_1q_1 - p\star f_2q_2\\
 &= p^{k+1}\xi^{i-k} +\text {known quantum corrections}
\end{align*}
where $f_1$ is the quantum correction from $p\star p^k\xi^{i-k-1}$ and  $f_2$ the correction from $\xi\star p^k\xi^{i-k-1}$. 
Note that these are of degree $i-d_1$ and $i-d_2$ respectively so their multiplication by divisors is, by assumption, known.
At each step the quantum corrections are all known and governed by invariants we already have, since they are necessarily of lower degree.
\end{proof}

For degree reasons quantum multiplication, by divisor classes, of classes of degree $i\leq n$ is completely determined by the Gromov--Witten invariants of the form $\langle\alpha,\beta\rangle^X_{0,2,A_2}$ and $\langle\alpha,\beta\rangle^X_{0,2, kA_1}$ for $kd_1\leq n+1$. 
Note in particular that there can be no corrections coming from $\langle\alpha,\beta\rangle^X_{0,2, A_1+A_2}$ invariants. 
Every basis element of degree $i\geq n+1$ is divisible by $\xi$, so we can apply the $p$-Lemma, the proof of which requires that we can divide out a factor of $\xi$. 

In the process of producing an algorithm from Lemmas~\ref{lem:xi} and \ref{lem:p} to carry out the reconstruction process, we consider cohomology classes as vectors in the lexicographical basis $\phi_i$. Quantum multiplication by the basis classes $p$ and $\xi$ can be considered as left multiplication of cohomology vectors by $r(n+1)\times r(n+1)$ matrices, $M_p$ and $M_\xi$ respectively. 

With this view point in mind, we see that the preceding lemmas can be reinterpreted. 
\begin{lem}[$p$-lemma as linear algebra]
Given the $i^{\text{th}}$ column of $M_p$ we can determine the $i^{\text{th}}$ column of $M_\xi$.
\end{lem}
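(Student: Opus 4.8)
The plan is to recognise that this statement is nothing more than the $\xi$-Lemma (Lemma~\ref{lem:xi}) transcribed into the language of the multiplication matrices, so the ``proof'' is a translation rather than a fresh argument. First I would set up the dictionary: by definition $M_p$ is the matrix, in the lexicographic basis $\{\phi_j\}$, of the operator $\alpha\mapsto p\star\alpha$, so the $i^{\text{th}}$ column of $M_p$ is precisely the coordinate vector of $p\star\phi_i$, and likewise the $i^{\text{th}}$ column of $M_\xi$ is the coordinate vector of $\xi\star\phi_i$. Every basis element is a monomial, $\phi_i=p^{k}\xi^{m}$ with $k,m\geq 0$ and $k+m=\deg\phi_i$, so knowing the $i^{\text{th}}$ column of $M_p$ is exactly the hypothesis ``$p\star p^{k}\xi^{m}$ is known'' of Lemma~\ref{lem:xi} (with its ``$i$'' taken to be $\deg\phi_i$ and its ``$k$'' the $p$-exponent; the condition $i\geq k$ holds automatically since $m\geq 0$).

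Next I would simply invoke Lemma~\ref{lem:xi}: it computes $\xi\star p^{k}\xi^{m}=\xi\star\phi_i$ with no moduli-space calculation, the only Gromov--Witten invariants needed beyond those already encoded in $p\star\phi_i$ being the invariants $\langle\alpha,\beta\rangle^X_{0,2,A_2}$, and these are supplied in closed form by Lemma~\ref{lem:zeroone}, independently of the particular bundle $E$. Reading off the coordinates of $\xi\star\phi_i$ in the basis $\{\phi_j\}$ then gives the $i^{\text{th}}$ column of $M_\xi$, as claimed. I would also note in passing that the entries of the $i^{\text{th}}$ column of $M_p$, via the divisor axiom, recover all two-point invariants $\langle\phi_i,\phi_j\rangle^X_{0,2,aA_1+bA_2}$ with $a\geq 1$ that can contribute for degree reasons, which is the precise sense in which ``column $i$ of $M_p$'' is the right input.

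The only point requiring any care is the bookkeeping of invariants, and that is already carried out inside the proof of Lemma~\ref{lem:xi}: one must check that the $i^{\text{th}}$ column of $M_p$ together with the $A_2$-invariants of Lemma~\ref{lem:zeroone} exhausts the two-point invariants appearing in $\xi\star\phi_i$. A correction to $\xi\star\phi_i$ of class $aA_1+bA_2$ has $b\geq 1$ (since $\xi\cdot A_1=0$) and, recalling $\deg q_1=d_1$ and $\deg q_2=d_2$, for degree reasons satisfies $ad_1+bd_2\leq\deg\phi_i+1$; those with $a\geq 1$ obey the same bound as the corrections to $p\star\phi_i$ and are therefore already known, and the remaining ones have $a=0$, $b\geq 1$, which vanish for $b\geq 2$ by Lemma~\ref{lem:zeroone} and reduce to the single invariant $\langle\phi_i,\phi_j\rangle^X_{0,2,A_2}$ for $b=1$. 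Hence no geometric input beyond Lemmas~\ref{lem:xi} and~\ref{lem:zeroone} is needed; I expect no genuine obstacle here, since the statement is essentially a reformulation of Lemma~\ref{lem:xi}.
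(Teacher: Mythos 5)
Your proposal is correct and matches the paper's intent exactly: the paper gives no separate argument for this matrix-form statement, treating it as a direct reinterpretation of Lemma~\ref{lem:xi} (with the $A_2$-invariants supplied by Lemma~\ref{lem:zeroone}), which is precisely the translation you carry out, including the correct bookkeeping via the divisor axiom and the degree bound $ad_1+bd_2\leq \deg\phi_i+1$. Note only that the paper's label ``$p$-lemma as linear algebra'' is a slip of naming --- as you observed, the content is that of the $\xi$-Lemma --- and your identification is the right one.
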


\begin{lem}[$\xi$-lemma as linear algebra]
Assume the first $i$ columns of $M_p$ and $M_q$ have been determined. We can calculate the $i+1^{\text{st}}$ column of $M_p$ using linear algebra.
\end{lem}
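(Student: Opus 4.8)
The plan is to recognise this statement as the matrix incarnation of the $p$-Lemma (Lemma~\ref{lem:p}), and to prove it by a well-founded recursion on the column index; the only real issue will be checking that the chosen ordering of the basis $\phi_1,\dots,\phi_{r(n+1)}$ is compatible with the hypotheses of that lemma. Write $\phi_{i+1}=p^a\xi^b$ and set $e=a+b=\deg\phi_{i+1}$. If $e\le n$, then for degree reasons the column of $M_p$ at $\phi_{i+1}$ is already pinned down by the invariants $\langle\alpha,\beta\rangle^X_{0,2,A_2}$ of Lemma~\ref{lem:zeroone} together with the input invariants $\langle\alpha,\beta\rangle^X_{0,2,kA_1}$ with $kd_1\le n+1$, so nothing has to be reconstructed; the substance of the statement is the case $e\ge n+1$, and I will assume the recursion has already filled in every column indexed by a basis element of degree $\le n$.

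Suppose then $e\ge n+1$. Since $p^{n+1}=0$ we have $a\le n$, hence $b=e-a\ge 1$, so $\phi_{i+1}$ is divisible by $\xi$ and Lemma~\ref{lem:p} applies with $k=a$. Its proof rewrites the product as $p\star p^a\xi^b=\xi\star p^{a+1}\xi^{b-1}+\xi\star f_1q_1-p\star f_2q_2$, where $f_1q_1$ is the quantum-correction part of $p\star p^a\xi^{b-1}$ and $f_2q_2$ that of $\xi\star p^a\xi^{b-1}$, so that $\deg f_1=e-d_1<e$ and $\deg f_2=e-d_2<e$. I would then read each term on the right as a known linear-algebra operation: the classical term $p\cdot(p^{a+1}\xi^{b-1})=p^{a+1}\xi^b$ is an explicit vector in $H^\bullet(X)$ (and vanishes when $a=n$, as noted after Lemma~\ref{lem:p}); the term $\xi\star p^{a+1}\xi^{b-1}$ is the column of $M_\xi$ indexed by $p^{a+1}\xi^{b-1}$, a basis element of the same degree $e$ as $\phi_{i+1}$ but with one more factor of $p$, hence of strictly smaller index in the lexicographic ordering (in which $p$ precedes $\xi$), so it lies among the first $i$ columns of $M_\xi$; and $f_1,f_2$ are coefficient vectors of the already-known columns $p\star p^a\xi^{b-1}$ and $\xi\star p^a\xi^{b-1}$ (of index $\le i$, since $p^a\xi^{b-1}$ has degree $e-1$), while $\xi\star f_1=M_\xi f_1$ and $p\star f_2=M_p f_2$ are matrix--vector products against columns of degree $e-d_1$ and $e-d_2$, again among the first $i$. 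Summing these vectors with the indicated signs yields the $(i+1)^{\text{st}}$ column of $M_p$, obtained purely by matrix--vector multiplications and linear combinations, with no moduli-space computation.

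Beyond quoting Lemma~\ref{lem:p}, the substance of the argument is the indexing bookkeeping of the previous paragraph, and this is where I expect the only real obstacle to lie: one must confirm that the lexicographic basis refines the degree grading (so lower-degree elements have lower index) and that, within a fixed degree, trading a factor $\xi$ for a factor $p$ strictly lowers the index. Granting those two monotonicity properties, every column and every coefficient invoked above carries index $\le i$, so the recursion is well-founded and each step legitimate; in particular the boundary case $e=n+1$ causes no trouble, the only degenerate sub-cases being $b-1=0$ (where $\xi\star p^{a+1}$ is still a degree-$e$ column of $M_\xi$ of index $\le i$) and $a+1=n+1$ (where that term simply vanishes). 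Everything else is the transcription of Lemma~\ref{lem:p} into matrix language, in exact parallel with the preceding linear-algebra reformulation.
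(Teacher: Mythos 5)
Your proposal is correct and matches the paper's intent: the paper states this lemma without a separate proof, presenting it as the matrix transcription of Lemma~\ref{lem:p} (despite the confusing ``$\xi$-lemma'' label), and its algorithm uses exactly the rearrangement $p\star p^{k}\xi^{i-k}=\xi\star p \star p^{k}\xi^{i-k-1}-p\star(\xi \star p^{k}\xi^{i-k-1}-p^{k}\xi^{i-k})$ that you derive. The two monotonicity properties you flag --- that the lexicographic order refines the degree grading and that, within a fixed degree, more factors of $p$ means a strictly smaller index --- are confirmed by the paper's explicit formula for $\Ind{d,k}$, so your bookkeeping closes the only gap the paper leaves implicit.
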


\begin{theorem}\label{thm:reconstruction}
Let $X=\PP(E)$ with $E\rightarrow\PP^n$ a Fano bundle of rank $r$, such that $r+1+c_1(E)>0$. The quantum cohomology of $X$ can be reconstructed from the Gromov--Witten invariants which map to target class $A$ such that $-K\cdot A\leq n+1$. In particular in our fixed basis that is invariants of the form $\langle\alpha,\beta\rangle^X_{0,2,A_2}$ (which are determined by Lemma~\ref{lem:zeroone}) and $\langle\alpha,\beta\rangle^X_{0,2,kA_1}$ for $kd_1\leq n+1$.
\end{theorem}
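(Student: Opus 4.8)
The plan is to turn Lemmas~\ref{lem:zeroone}, \ref{lem:xi} and \ref{lem:p} into an explicit algorithm and to check that every step of it consumes only Gromov--Witten invariants with $-K\cdot A\le n+1$. First I reduce to reconstructing quantum multiplication by the divisor classes alone. Since the cohomology ring of $X$ is generated by $p$ and $\xi$, every basis monomial $p^k\xi^l$ of positive degree is the classical cup product of $p$ or of $\xi$ with a basis monomial of one lower degree; moreover a quantum correction always outputs a cohomology class of strictly smaller degree than the associated classical product, as it carries a positive power of $q_1$ or $q_2$. Hence, inducting on the degree of the second factor and using associativity and commutativity of $\star$, an arbitrary quantum product $\alpha\star\beta$ is computed recursively from the operators $M_p\colon\gamma\mapsto p\star\gamma$ and $M_\xi\colon\gamma\mapsto\xi\star\gamma$; so it suffices to reconstruct $M_p$ and $M_\xi$, that is, to reconstruct $p\star p^k\xi^{i-k}$ and $\xi\star p^k\xi^{i-k}$ for every basis monomial.

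Now I induct on the cohomological degree $i$ of the class being multiplied, the base case $i=0$ being the fundamental-class axiom. For $1\le i\le n$, Assumption~2 gives $d_1+d_2=n+(r+1+c_1)\ge n+1$, so each correction in $p\star p^k\xi^{i-k}$ or $\xi\star p^k\xi^{i-k}$ is governed by a two-point invariant $\langle\,\cdot\,,\,\cdot\,\rangle_{0,2,aA_1+bA_2}$ with $ad_1+bd_2\le i+1\le n+1$; such an invariant is either an $A_2$-invariant, supplied by Lemma~\ref{lem:zeroone}, or has target $aA_1+bA_2$ with $-K\cdot(aA_1+bA_2)\le n+1$ and so is among the input data, and the divisor axiom then extracts the degree-$i$ columns of $M_p$ and $M_\xi$ directly. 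For $n+1\le i\le n+r-1$ every basis monomial of degree $i$ contains a factor of $\xi$, and I run an inner induction on the exponent $k$ of $p$, descending from $k=n$ to $k=\max(0,i-r+1)$. At $k=n$ one has $\xi\star p^{n+1}\xi^{i-n-1}=\xi\star 0=0$, so the $p$-Lemma (Lemma~\ref{lem:p}) --- whose remaining hypothesis, quantum multiplication by divisors in degrees $\le i-1$, holds by the outer induction --- yields $p\star p^n\xi^{i-n}$; the $\xi$-Lemma (Lemma~\ref{lem:xi}) then converts this to $\xi\star p^n\xi^{i-n}$, needing beyond it only the $A_2$-invariants of Lemma~\ref{lem:zeroone}. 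Decreasing $k$ by one at a time, the $p$-Lemma applied with the already-obtained value $\xi\star p^{k+1}\xi^{i-k-1}$ gives $p\star p^k\xi^{i-k}$, and the $\xi$-Lemma gives $\xi\star p^k\xi^{i-k}$. This exhausts all basis monomials, so $M_p$, $M_\xi$ and hence all of quantum cohomology are determined from the stated invariants.

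The only genuine difficulty is the bookkeeping of the two interlocking inductions. One must verify, each time the $p$-Lemma is invoked at a pair $(i,k)$, that the same-degree input it requires, $\xi\star p^{k+1}\xi^{i-k-1}$, is already available --- which is precisely the point of treating $k=n$ separately via $p^{n+1}=0$ and of alternating the $p$- and $\xi$-Lemmas within each fixed degree --- and that the lower-degree data it requires has been produced by the outer induction. One should also check that the inequality $d_1+d_2\ge n+1$ cleanly confines genuinely mixed corrections $q_1^aq_2^b$ (with $a,b\ge 1$) to the range $i\ge n$, the sole borderline case --- the class $A_1+A_2$, which can arise only when $d_1+d_2=n+1$, and then only in degree exactly $n$ --- being harmless because $-K\cdot(A_1+A_2)=n+1$ already lies in the allowed range. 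Beyond these checks the argument is the linear-algebra reformulation of Lemmas~\ref{lem:xi} and \ref{lem:p} recorded above.
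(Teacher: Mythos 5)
Your argument is essentially the paper's proof: reduce to the matrices of quantum multiplication by $p$ and $\xi$, note that the degree-$\leq n$ columns are fixed by the stated invariants, and then sweep through the lexicographically ordered basis degree by degree, seeding each degree with the special case $p^{n+1}=0$ of the $p$-Lemma and alternating it with the $\xi$-Lemma. The paper packages this same double induction as explicit pseudo-code (and asserts outright that no $A_1+A_2$ corrections occur, where you instead absorb that borderline case into the allowed input with $-K\cdot A\leq n+1$).
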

\begin{proof}
The proof proceeds by the construction of an algorithm. We alternately apply Lemma~\ref{lem:p} followed by Lemma~\ref{lem:xi} to calculate the multiplication for all basis elements following the lex ordering. We produce this algorithm in pseudo-code below.

We first describe in words, roughly how the algorithm proceeds.
Consider the first basis element  of degree $n+1$ (under lexicographical ordering), i.e. $p^n\xi$. 
Since we are in the special case of Lemma~\ref{lem:p} ($p^{n+1}=0$) we obtain $p\star p^n\xi$. We are now in a position to apply Lemma~\ref{lem:xi}, to obtain $\xi \star p^n\xi$.
One can easily check that we can now apply both lemmas to the next basis element in degree $n+1$ and so on. After obtaining the multiplication data for classes of degree $n+1$ we repeat the process for degree $n+2$ (again obtaining the first multiplication for free due to the vanishing of $p^{n+1}$. We repeat until we obtain the full quantum multiplication data.

The input for the reconstruction process is a pair of $r(n+1)\times r(n+1)$ matrices $M_p$ and $M_\xi$. The columns corresponding to multiplication of degrees $\leq n$ are known\footnote{in the case $r>n$ this is the first $\frac{1}{2}(n+2)(n+1)$ columns} and we initialise the unknown entries as zero. By convention we label arrays with the first entry given index 1. We may think of the reconstruction process as giving an algorithm to fill in the rest of the matrices $M_p$ and $M_\xi$

The result of Lemma~\ref{lem:p} can usefully be written as
$$p\star p^{k}\xi^{i-k}=\xi\star p \star p^{k}\xi^{i-k-1}-p\star(\xi \star p^{k}\xi^{i-k-1}-p^{k}\xi^{i-k})$$
We use this form in the algorithm as it lends itself easily to calculation by matrix multiplication.

We define the following functions for use in our pseudo-code:
Let $\Ind{d,k}$ be the position that $p^{k}\xi^{d-k}$ appears in the basis of $H^\bullet(X)$, when given lexicographical ordering 
In the case that $r>n$, this is given by
$$
\left\{
		\arraycolsep=3pt
        \begin{array}{ll}
            \frac{1}{2}(d+2)(d+1)-k, &  \text{for } d\leq n\\
            \frac{1}{2}(n+1)(2d-n+2)-k, &  \text{for } n<d<r\\
            \frac{1}{2}((n+1)(2r-n)+(d-r+1)(2n-d+r))\;&\\
            \quad -k+d+1-r, &  \text{for } r\geq d
        \end{array}
    \right.
$$

Next, $\V{d,k}$ is the vector corresponding to the monomial $p^k\xi^{d-k}$

$$
\V{d,k}[j] = \left\{
        \begin{array}{ll}
            1 & \quad \text{for } j=\Ind{d,k}\\
            0 & \quad \text{otherwise } 
        \end{array}
    \right.
$$

Finally we define the Term function, which extracts from a polynomial the term specified along with the corresponding coefficient. It is given by the expression $\term{\text{polynomial},\text{monomial}}$.
e.g. $\term{5x^2+3xy+2x+1,x}=2x$

Let $C$ be an $r$-vector with $C[i]:=-c_i(E)$, note that we set the entries to $-c_i(E)$ to determine the form $\xi^r$ takes.

We produce the following algorithm. Note that we aim for clarity of exposition and as such the algorithm is not minimised in terms of calculational expense. 
\begin{algorithmic}
\State \#\# d loops over all the degrees to calculate
\For{$d=n+1 \to r+n$}
	\State \#\# We loop over all classes in degree d
	\State \#\# Note that the expression below in I(.,.) determines the number of 
	\State\#\# basis elements of the given degree
	\State\#\# together d and k determine which column of the matrix 
	\State \#\# we're filling in
	\For{$k=0 \to (\Ind{d+1,n}+\Ind{d,n}-1)$}
	\State\#\# looping over j goes through each row of the selected column
		\For{$j=1 \to (n+1)r$}
			\State \#\# applying the $p$-lemma
			\State \#\# note that we use the special form described above
			\State $M_p[j,\Ind{d,n-k}]:=(M_\xi \cdot M_p \cdot \V{d-1,n-k}-M_p\cdot(M_\xi \cdot\V{d-1,n-k}-\V{d,n-k}))[j]$
			\State \#\# applying the $\xi$-lemma
			\State \#\# we loop over the possible powers of $q_1$
			\For{$s=1\to (n+1)r$}
			\State \#\# Extracting the quantum corrections from the relevant 			
			\State \#\# $p$ multiplication. We just pull out the data
			\State \#\#  from the corresponding column in $M_p$
			\State\#\# Note the important factor $\frac{1}{s}$
			\State  \#\# from the divisor axiom
				\State $M_\xi[j,\Ind{d,n-k}]:=M_\xi[j,\Ind{d,n-k}]+\frac{1}{s}\term{M_p[j,\Ind{d,n-k}],q_1^sq_2}$
			\EndFor
		\EndFor
		\State \#\# Testing for special case where class $=p^a\xi^{r-1}$
		\State \#\# i.e the case where we need to insert chern numbers for $\xi^r$.
	\If{$k=n+r-d-1 \textbf{ and } d\geq r-1$}
		\State \#\# Insert additional quantum correction in special case
		\State \#\# This Gromov--Witten invariants is determined by Lemma~\ref{lem:zeroone}
		\State $M_\xi[\Ind{d-r+1,n-k},\Ind{d,n-k}]:=q_2$
		\State \#\# Insert classical multiplication in special case, 
		\State \#\# i.e $\xi^r=-c_1p\xi^{r-1}-c_2p^2\xi^{r-2}-\dots$
		\State \#\# We shift the entries as we insert  them to account for the
		\State \#\#  powers of $p$ that the class we're multiplying includes. 
		\For{$s=0 \to k-1$}
			\State $M_\xi[\Ind{d+1,n-s},\Ind{d,n-k}]:=C[k-s]$
		\EndFor
	\Else
		\State \#\# Insert classical multiplication in generic case
		\State \#\# i.e. dealing with all cases where we don't get $\xi^r$
		\State \#\# note that by Lemma~\ref{lem:zeroone} there are no quantum corrections 
		\State\#\# in this case
		\State $M_\xi[\Ind{d+1,n-k},\Ind{d,n-k}]:=1$
	\EndIf
	\EndFor
\EndFor
\end{algorithmic}
\end{proof}

\begin{rem}
The  reconstruction algorithm proposed in the proof to Theorem \ref{thm:reconstruction} makes use of the fact that, as a special case of Lemma~\ref{lem:p}, $p\star p^n\xi^{i-n}$ is known for $i>n$. 
The algorithm takes as input the columns of the matrices for quantum multiplication by $p$ and $\xi$ which correspond to classes of degree $i\leq n$.

The combination of Lemmas \ref{lem:xi} and \ref{lem:p} in fact shows additionally that in any degree~$i\leq n$, given the input $p\star p^i$ (and the Gromov--Witten invariants determined by Lemma~\ref{lem:zeroone}), we can determine the quantum multiplication of all other classes of degree $i$ without explicit calculation of any other Gromov--Witten invariants. This remark shows that the quantum cohomology for $X$ can be recovered from  $p\star p^k$, for $1<k\leq n-1$ alone.
\end{rem}

\subsection{Special Cases}

There are two special cases in which Theorem~\ref{thm:reconstruction} can be significantly strengthened. 
The first is when the second extremal contraction realises $X$ as the projectivisation of a vector bundle on some other space, the second when the contraction realises $X$ as the blowup of some smooth space in a smooth locus. 
In both cases the entire quantum cohomology is determined by counting lines in the fibres of the two extremal maps $P$ and $\Xi$.
We first prove a lemma regarding the Gromov--Witten invariants of a blow-up of a smooth sub-variety in a smooth ambient space.

\begin{lem}\label{lem:blowup} 
Let $X:=\PP(E)$ with $E$ a Fano bundle satisfying Assumption 1. Let the extremal contraction associated to $A_1$ be given by $\Xi:X\rightarrow Y$ is the blow up of  $Z\subset Y$, with both $Y$ and $Z$ smooth. 
Then the Gromov--Witten invariants of the form $\langle\alpha,\beta\rangle^X_{0,2,kA_1}$ vanish for $k\geq2$.
 
Furthermore, $\langle\alpha,\beta\rangle^X_{0,2,A_1}=\Xi|_\ast\iota^\ast\alpha\cdot\Xi|_\ast\iota^\ast\beta$, where $\Xi|$ is the restriction of $\Xi$ to $D\subset X$, the exceptional divisor of the blow up, and $\iota: D\hookrightarrow X$ is the embedding of $D$ in $X$.
\end{lem}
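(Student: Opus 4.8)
The plan is to reduce the statement to Lemma~\ref{lem:zeroone} applied to the exceptional divisor, viewed as a projective bundle over the blow-up centre. Write $D\subset X$ for the exceptional divisor of $\Xi$ and $c:=\operatorname{codim}_Y(Z)$; since $Z$ is smooth and the blow-up is non-trivial, $c\ge 2$, and in Fulton's convention $\Xi|_D\colon D\to Z$ presents $D$ as $\PP(N_{Z/Y})$, a $\PP^{c-1}$-bundle. The key geometric input is that, because $\Xi$ is the extremal contraction of the ray $R_1=\RR_{\ge 0}A_1$, every stable map $f\colon C\to X$ of class $kA_1$ with $k\ge 1$ has connected, $\Xi$-contracted image, hence $f(C)$ is contained in a single fibre $\Xi^{-1}(z)\cong\PP^{c-1}\subset D$. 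A line $\ell$ in such a fibre has $D\cdot\ell=-1$, so $[\ell]$ lies on $R_1$ and is primitive, whence $[\ell]=A_1$, and $-K_X\cdot A_1=-(c-1)(D\cdot\ell)=c-1$, i.e.\ $d_1=c-1$. It follows that $X_{0,2,kA_1}$ is identified with $D_{0,2,kA_1}$, and, each $\ev_i$ on $X$ factoring through $\iota\colon D\hookrightarrow X$, that $\langle\alpha,\beta\rangle^X_{0,2,kA_1}=\langle\iota^\ast\alpha,\iota^\ast\beta\rangle^D_{0,2,kA_1}$. For $k=1$ this is an equality of virtual fundamental classes as well, since $H^1(\ell,T_X|_\ell)=0$: the only negative summand of $T_X|_\ell$ is $N_{D/X}|_\ell\cong\OO_{\PP^1}(-1)$, which is acyclic; for $k\ge 2$ this will not be needed.

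Thus it suffices to prove the two assertions for the two-point invariants of the $\PP^{c-1}$-bundle $\Xi|_D\colon D\to Z$ with target class $k$ times the fibre line, which is exactly the content of Lemma~\ref{lem:zeroone} (nothing in its proof uses Fano-ness of the bundle, only that $\Xi|_D$ is a projective bundle). Concretely, for $k\ge 2$ one runs the dimension count of Lemma~\ref{lem:zeroone} on $Z$: a non-zero invariant would force $\dim_\C\PD(\alpha)+\dim_\C\PD(\beta)=\dim X+1-k(c-1)$, so the images in $Z$ of the traces on $D$ of generic representatives have dimensions summing to at most $\dim X-1-k(c-1)\le\dim X-2c+1$, which is strictly less than $\dim Z=\dim X-c$ precisely because $c\ge 2$; hence these images are disjoint, no curve of class $kA_1$ can meet both cycles, and the invariant vanishes. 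For $k=1$, Lemma~\ref{lem:zeroone} gives $\langle\iota^\ast\alpha,\iota^\ast\beta\rangle^D_{0,2,A_1}=(\Xi|_D)_\ast\iota^\ast\alpha\cdot(\Xi|_D)_\ast\iota^\ast\beta$ (it counts, via the unique line through two points of $\PP^{c-1}$, the fibres of $\Xi|_D$ meeting generic representatives of $\PD(\iota^\ast\alpha)$ and $\PD(\iota^\ast\beta)$), which is the asserted formula $\langle\alpha,\beta\rangle^X_{0,2,A_1}=\Xi|_\ast\iota^\ast\alpha\cdot\Xi|_\ast\iota^\ast\beta$.

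I expect the main obstacle to be making the reduction to $D$ fully rigorous: one must check that every stable map of class $kA_1$ genuinely factors scheme-theoretically through $D$ and that the identification $X_{0,2,kA_1}=D_{0,2,kA_1}$ respects the obstruction theories when $k=1$, and then, within the argument of Lemma~\ref{lem:zeroone}, that a generic representative of $\PD(\alpha)$ meets $D$ transversally so that its trace represents $\iota^\ast\alpha$ and pushes forward along the projective bundle $\Xi|_D$ to $(\Xi|_D)_\ast\iota^\ast\alpha$. All of this is routine excess-intersection and deformation bookkeeping once the curves have been located inside the fibres of $\Xi|_D$, so the one substantive point that differs from Lemma~\ref{lem:zeroone} is the inequality $\dim X-2c+1<\dim Z$, which hinges on $\operatorname{codim}_Y(Z)\ge 2$.
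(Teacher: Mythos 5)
Your proposal is correct and takes essentially the same route as the paper: curves of class $kA_1$ are confined to the fibres of $\Xi|$ on the exceptional divisor $D\cong\PP(N_{Z/Y})$, so everything reduces to (the argument of) Lemma~\ref{lem:zeroone} applied to this projective bundle over $Z$. Your extra care — checking $H^1(\ell,T_X|_\ell)=0$ so that the $X$- and $D$-invariants agree for $k=1$, and re-running the dimension count with $d_1=c-1$ and $\operatorname{codim}_Y Z\geq 2$ — fills in points the paper's proof passes over silently.
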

\begin{proof}

We summarise the geometry of the statement in the following diagram

\[
\def\objectstyle{\displaystyle}
\xymatrix{
  D\ar@{^{(}->}[r]^\iota \ar[d]^{\Xi|}  & X\ar[d]^\Xi \\
  Z\ar@{^{(}->}[r]& Y}
\]
Curves of class $kA_1$ are restricted to the fibres of $\Xi$. 
It is clear that they must live in the exceptional fibres of $\Xi$, which are precisely the fibres of $\Xi|$. The calculation may be carried out inside $D$:
stable maps with target $X$ and class $kA_1$ are in one to one correspondence with stable maps with target $D$ and class $\iota^\ast kA_1$.

Since $Y$ and $Z$  are both smooth, the exceptional locus, $D$, is given by $\PP(N_{Z/Y})$, when $N_{Z/Y}$ is the normal bundle of $Z$ in $Y$. We are now in the case of Lemma~\ref{lem:zeroone}.
\end{proof}

We have proved the following corollary to Theorem~\ref{thm:reconstruction}
\begin{cor}\label{cor:specialCase}
Let $X=\PP(E)$ with $E\rightarrow\PP^n$  Fano bundle satisfying Assumptions 1 \& 2 and with the second extremal contraction $\Xi:\PP(E)\rightarrow Y$ given by either
\begin{enumerate}
\item the projectivisation of a bundle $E'\rightarrow Y$
\item the blow up of a smooth subvariety $Z$ inside smooth $Y$
\end{enumerate}
then the quantum cohomology of $X$ can be reconstructed from the Gromov--Witten invariants of the form  $\langle\alpha,\beta\rangle_{0,2,A_1}$ and $\langle\alpha,\beta\rangle_{0,2,A_2}$. These Gromov--Witten invariants are determined by Lemmas \ref{lem:zeroone} and \ref{lem:blowup}.
\end{cor}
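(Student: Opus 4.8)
The plan is to feed the vanishing statements of Lemmas~\ref{lem:zeroone} and \ref{lem:blowup} into the input required by Theorem~\ref{thm:reconstruction}. That theorem reconstructs the quantum cohomology of $X$ from the invariants $\langle\alpha,\beta\rangle^X_{0,2,A_2}$ together with those of the form $\langle\alpha,\beta\rangle^X_{0,2,kA_1}$ with $kd_1\leq n+1$. The $A_2$ invariants are already computed by Lemma~\ref{lem:zeroone} applied to the bundle map $P\colon X\to\PP^n$, so everything comes down to showing that, in each of the two special cases, no class $kA_1$ with $k\geq 2$ contributes and that the class $A_1$ itself contributes a computable invariant.

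Case (2) is immediate: the hypothesis is precisely the setup of Lemma~\ref{lem:blowup}, which gives $\langle\alpha,\beta\rangle^X_{0,2,kA_1}=0$ for $k\geq 2$ and expresses $\langle\alpha,\beta\rangle^X_{0,2,A_1}$ as a push--pull over the exceptional divisor $D$. Hence the non-classical input needed by Theorem~\ref{thm:reconstruction} reduces to $\langle\alpha,\beta\rangle_{0,2,A_1}$ and $\langle\alpha,\beta\rangle_{0,2,A_2}$. For case (1), I would first note that $\Xi$ contracts the ray $R_1$ (it is the morphism attached to $\xi$, and $\xi\cdot A_1=0$), so the fibres of $\Xi$ are swept out by curves proportional to $A_1$; writing $\xi'$ for the relative hyperplane class of the bundle $E'\to Y$ for which $X=\PP(E')$, one checks from $\xi\cdot A_1=0$ and $\xi\cdot A_2=1$ that the line class in a fibre of $\Xi$ is exactly the primitive generator $A_1=\PD(\widehat{\xi'})$. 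Since $Y$ is smooth (being the base of a projective bundle whose total space $X$ is smooth) and $X$ is Fano, Lemma~\ref{lem:zeroone} applies verbatim to $E'\to Y$ with $\pi=\Xi$, giving $\langle\alpha,\beta\rangle^X_{0,2,kA_1}=0$ for $k\geq 2$ and $\langle\alpha,\beta\rangle^X_{0,2,A_1}=\Xi_{*}\alpha\cdot\Xi_{*}\beta$. Either way Theorem~\ref{thm:reconstruction} is supplied only with $A_1$ and $A_2$ invariants, and these are determined by Lemmas~\ref{lem:zeroone} and \ref{lem:blowup}.

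The only step needing care — and it is mild — is the identification in case (1) of the fibre-line class of the second projective-bundle structure with the primitive generator $A_1$ (rather than a positive multiple of it), so that Lemma~\ref{lem:zeroone} can be invoked with no further normalisation. Everything else is just the observation that the special-case geometry kills all higher multiples $kA_1$ with $k\geq 2$, so that the general reconstruction of Theorem~\ref{thm:reconstruction} requires nothing beyond the line counts in the fibres of $P$ and of $\Xi$.
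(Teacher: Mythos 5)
Your proposal is correct and matches the paper's (essentially implicit) argument: the corollary is obtained by feeding the $A_2$ invariants from Lemma~\ref{lem:zeroone}, the vanishing of $kA_1$ classes for $k\geq 2$, and the $A_1$ line counts (Lemma~\ref{lem:zeroone} applied to the second bundle structure in case (1), Lemma~\ref{lem:blowup} in case (2)) into Theorem~\ref{thm:reconstruction}. Your extra check that the fibre-line class of $\Xi$ is the primitive generator $A_1$ (since it pairs to $1$ with the relative hyperplane class of $E'$) is a detail the paper leaves tacit, but it is the same route.
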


\section{The Geometry of Rank-2 Fano 3-fold No. 17}
Our aim is to compute the quantum period sequence of $M$, the rank-2 Fano 3-fold No. 17 in Mori--Mukai \cite{Mori--Mukai}. $M$ can be embedded as a complete intersection in $X=\PP(E)$ for some Fano bundle $E$ which we now describe.

Fix a vector space $V\simeq \C^5$ with $\PP^4:=\PP(V)$.
Let $E:=\Omega^2_{\PP(V)}(2)$ be the second wedge of the bundle of holomorphic differentials on $\PP^4$, twisted by $\OO(2)$.
Let $X$ be the total space $\PP(E)$. We can be naturally view $X$ as the blow-up of the Pl\"ucker embedding of the Grassmannian $G(2,V^\star)\subset\PP(\wedge^2V^\star)$.  

As above, let $P$ be the canonical map induced by $X$'s bundle structure and $\Xi$ the contraction induced by the second extremal ray. 
We illustrate this in the following diagram.
\[
\def\objectstyle{\displaystyle}
\xymatrix{
  X \ar[r]^{\Xi\quad}\ar[d]_P  &\PP(\wedge^2 V^\star) \\
  \PP(V) &  }
\]

\subsection{Cohomology and Extremal Rays of $X$}
The ordinary cohomology of $X$ is determined by the Chern classes of $E$, which are easily obtained from the following exact sequence of bundles, taking the second wedge of the Euler sequence on $\PP(V)$.
By abuse of notation we refer by $V^\star\otimes\OO$ to the trivial $V^\star$ bundle on $\PP(V)$.

\begin{equation}\label{eq:defseq}0\rightarrow E\rightarrow \wedge^2 V^\star\otimes \OO \xrightarrow{v\lrcorner} V^\star \otimes \OO(1) \rightarrow \OO(2) \rightarrow 0\end{equation}

Here $v\lrcorner$ denotes the contraction by the vector $v\in V$ representing the point in $\PP(V)$. From \eqref{eq:defseq} it is clear that $X$ embeds into $\PP(V)\times \PP(\wedge^2 V^\star)\cong\PP^4\times\PP^9$.
The total Chern class is $c(E)=1-3p +5p^2 - 5p^3$ and hence

 \begin{equation}\label{eq:cohom}
 H^\bullet(X)=\frac{\C[p,\xi]}{(p^5,\xi^6-3p\xi^5 +5p^2\xi^4 - 5p^3\xi^3)}
 \end{equation}

We check that $\xi$ is nef and prove that $E$ is a Fano bundle.
\begin{lem}\label{lem:nefness}
$\xi$ is nef.
\begin{proof}
Since $\xi$ is the relative hyperplane on $\PP(E)$, it is a quotient of $P^* E^\star$. $E^\star$ is generated by global sections (one sees this by dualising \eqref{eq:defseq}) so $\xi$ is also generated by global sections and is therefore nef.
\end{proof}
\end{lem}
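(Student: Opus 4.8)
The plan is to exhibit the line bundle $\OO_{\PP(E)}(1)$, whose first Chern class is $\xi$, as globally generated; since a globally generated line bundle has non-negative degree on every curve, nefness follows immediately.

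First I would recall that under Fulton's convention $\PP(E)$ carries the tautological inclusion $\OO_{\PP(E)}(-1)\hookrightarrow P^*E$ of the tautological line subbundle into the pullback of $E$. Dualizing this inclusion of vector bundles yields a surjection $P^*E^\star\twoheadrightarrow \OO_{\PP(E)}(1)$, so $\OO_{\PP(E)}(1)$ is a quotient of $P^*E^\star$. It therefore suffices to show that $E^\star$ is globally generated: then $P^*E^\star$ is globally generated, and hence so is its quotient $\OO_{\PP(E)}(1)$.

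Next I would show $E^\star$ is globally generated by dualizing the defining sequence \eqref{eq:defseq}. Break \eqref{eq:defseq} into the two short exact sequences $0\to E\to \wedge^2 V^\star\otimes\OO\to K\to 0$ and $0\to K\to V^\star\otimes\OO(1)\to \OO(2)\to 0$, where $K$ is the image sheaf. Since $V^\star\otimes\OO(1)\to\OO(2)$ is a surjection of vector bundles, $K$ is a subbundle and in particular locally free, so both sequences are exact sequences of vector bundles. Dualizing the first one gives $0\to K^\star\to \wedge^2 V\otimes\OO\to E^\star\to 0$, which exhibits $E^\star$ as a quotient of the trivial bundle $\wedge^2 V\otimes\OO$; a quotient of a trivial bundle is globally generated, which closes the chain.

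The argument is essentially formal, so there is no serious obstacle. The only points requiring care are (i) keeping track of Fulton's convention, so that $\OO_{\PP(E)}(1)$ is genuinely a quotient of $P^*E^\star$ rather than of $P^*E$, and (ii) verifying that the intermediate sheaf $K$ in the four-term sequence \eqref{eq:defseq} is locally free, so that dualization preserves exactness; both are routine.
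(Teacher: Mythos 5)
Your proposal is correct and follows the same route as the paper: realise $\OO_{\PP(E)}(1)$ as a quotient of $P^*E^\star$, show $E^\star$ is globally generated by dualising \eqref{eq:defseq}, and conclude nefness from global generation. You merely spell out the details (splitting the four-term sequence and checking the intermediate kernel is locally free) that the paper leaves implicit.
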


Since $-K_X=6\xi+2p$, and both $p$ and $\xi$ are nef we see that $-K_X$ is ample; $X$ is Fano.

In $H_2(X,\z)$ we fix classes $A_1,A_2$. Recall that $A_2$ is the class of a line in the fibre $P:X\rightarrow\PP(V)$.  
In this example $A_1$ is the class of a line in an exceptional fibre of $\Xi:X\rightarrow \PP(\wedge^2V^\star)$ and is isomorphic, by $P$, to a generic line in $\PP^n$. We have
$$p\cdot A_1=0,  \xi\cdot A_1=1, p\cdot A_2=1, \xi\cdot A_2=0$$
from which follows $A_1=\PD(p^3 \xi^5-3p^4 \xi^4)$ and $A_2=\PD(p^4 \xi^4)$.

\begin{lem}
$A_1,A_2$ generate the extremal rays of the Mori cone.
\end{lem}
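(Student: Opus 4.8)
The plan is to determine $\overline{NE}_X$ exactly, using the two nef classes $p$ and $\xi$ as coordinates. Since \eqref{eq:cohom} gives $H^2(X;\z)=\z p\oplus\z\xi$, we have $\rho(X)=2$ and $N_1(X)_\RR\cong\RR^2$; via the (perfect) intersection pairing I identify a curve class $C$ with the vector $(p\cdot C,\xi\cdot C)\in\RR^2$. Using the Poincar\'e duals recorded above together with $p^5=0$ and $\xi^6=3p\xi^5-5p^2\xi^4+5p^3\xi^3$, a one-line check gives $A_1\leftrightarrow(1,0)$ and $A_2\leftrightarrow(0,1)$. Now $p=P^\ast\OO_{\PP^4}(1)$ is the pull-back of an ample class, and $\xi$ is nef by Lemma~\ref{lem:nefness}, so $\overline{NE}_X$ is contained in the closed first quadrant $Q=\{(a,b):a,b\ge 0\}$.

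Next I would exhibit effective curves on the two coordinate axes. A line in a fibre $\PP^5$ of $P$ is contracted by $P$ and meets $\xi$ once, so it represents $(0,1)=A_2$. For $A_1$ I use the incidence presentation that falls out of \eqref{eq:defseq}, namely $X=\{([v],[\omega])\in\PP(V)\times\PP(\wedge^2V^\star):v\lrcorner\,\omega=0\}$, with $P,\Xi$ the two projections and $\xi=\Xi^\ast\OO_{\PP(\wedge^2V^\star)}(1)$. The fibre of $\Xi$ over $[\omega]$ is $\PP(\ker\omega)$, the projectivisation of the kernel of the alternating form $\omega$: a single point when $\omega$ has maximal rank, and, for $[\omega]\in G(2,V^\star)$ with $\omega=\alpha\wedge\beta$, the linearly embedded $\PP^2=\PP(U^\perp)\subset\PP(V)$ where $U=\langle\alpha,\beta\rangle$. (This recovers the identification of $\Xi$ with the blow-down of $\PP(\wedge^2V^\star)$ along $G(2,V^\star)$.) A line $B$ in one of these exceptional $\PP^2$'s is contracted by $\Xi$, so $\xi\cdot B=0$, and is carried by $P$ isomorphically onto a line in $\PP(V)$, so $p\cdot B=1$; hence $B$ represents $(1,0)=A_1$, which is therefore effective.

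It then follows formally that $\overline{NE}_X=Q$: as a convex cone containing $(1,0)$ and $(0,1)$ it contains all of $Q$, and we have just seen it is contained in $Q$. The two extremal rays of $Q$ are $\RR_{\ge 0}(1,0)=\RR_{\ge 0}A_1$ and $\RR_{\ge 0}(0,1)=\RR_{\ge 0}A_2$, which is exactly the assertion. (Alternatively, granting from the general theory that $\overline{NE}_X$ has precisely two extremal rays, it is enough to observe that $\RR_{\ge 0}A_2$ is the ray contracted by the bundle map $P$ and $\RR_{\ge 0}A_1$ the ray contracted by the blow-down $\Xi$.)

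The only step that is not routine is the geometric input of the second paragraph: realising $\Xi$ as the blow-down of $G(2,V^\star)$ and, above all, checking that its exceptional fibres are \emph{linearly} embedded $\PP^2$'s. This is precisely what forces the primitive generator of the second extremal ray to be $A_1$ itself, rather than a proper multiple of it, and it drops out of the explicit equation $v\lrcorner\,\omega=0$, which cuts the fibre of $\Xi$ over $[\omega]$ down to $\PP(\ker\omega)$. The nefness of $p$ and $\xi$, the two intersection vectors, and the cone-theoretic conclusion are all immediate.
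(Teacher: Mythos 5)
Your proof is correct and follows essentially the same route as the paper's: nefness of $p$ and $\xi$ confines $\overline{NE}_X$ to the first quadrant in the coordinates $(p\cdot C,\xi\cdot C)$, and effectivity of $A_1$ (a line in an exceptional $\PP^2$ of $\Xi$) and $A_2$ (a line in a fibre of $P$) forces the cone to be exactly that quadrant. The only addition is your explicit check that the exceptional fibres are linearly embedded, guaranteeing primitivity of $A_1$ — a point the paper leaves implicit — and note that your intersection numbers ($p\cdot A_1=1$, $\xi\cdot A_2=1$) match the normalisation of Section~2, whereas the displayed values just before the lemma appear to have $A_1$ and $A_2$ interchanged.
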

\begin{proof}
Both $p$ and $\xi$ are nef, so, given the homology class $aA_1+bA_2$ of any curve T, we have $a=\xi\cdot T \geq 0$ and $b=p\cdot T\geq 0$. 
On the other hand both $A_1$ and $A_2$ are contained in the Mori cone and therefore form the extremal rays.	
\end{proof}

The following discussion shows that $X$ is given by the blow up of $\PP(\wedge^2V^\star)$ along the Pl\"ucker embedding of $G(2,V^\star)$.

A point $[w] \in \PP(\wedge^2V^\star)$ is represented by a 2-form $w$.
The fibre $\Xi^{-1}([w])$ over this point consists of the subset of points of $\PP(V)$ which are represented by a vector $v$ annihilated by $w$. 
Considering $w$ as an antisymmetric $5 \times 5$ matrix $A\colon V \to V^\star$, it is clear that  $w$ is generically of rank 4, i.e. there is a one-dimensional space of vectors $v$ annihilated by $w$. So, generically the fibre $\Xi^{-1}([w])$ is a point. 
When $w$ drops ranks it must be rank 2, since we throw away the zero form upon projectivisation. Where $w$ is of rank 2, the fibre is a $\PP^2$.  
The locus of rank-2 forms corresponds to those elements of $\wedge^2V^\star$ which are decomposable to $w_1\wedge w_2$, where $w_1, w_2$ are 1-forms.  
This is exactly the Pl\"ucker embedding of $G(2,V^\star)\hookrightarrow\PP(\wedge^2V^\star)$ as a codimension-3 subvariety. 
Hence $X$ is isomorphic to the blow up of $G(2,V^\star)\subset \PP(\wedge^2V^\star)$.

\begin{lem}\label{lem:pf}
 The rational map $\PP(\wedge^2 V^\star) \dasharrow \PP(V)$ is given by the linear system of quadrics containing $G(2,V^\star)$.
\end{lem}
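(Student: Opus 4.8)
The plan is to write the rational map $\PP(\wedge^2 V^\star)\dasharrow\PP(V)$ down explicitly on the open locus where $\Xi$ is an isomorphism, recognise its component functions as the $4\times 4$ sub-Pfaffians of a generic skew-symmetric form on $V$, and then identify the linear span of those sub-Pfaffians with the \emph{complete} linear system of quadrics through $G(2,V^\star)$. Throughout I would use the description of the fibres of $\Xi$ given just before the lemma.

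First I would make the map set-theoretic. If $w\in\wedge^2 V^\star$ has maximal rank $4$, then by that description the fibre $\Xi^{-1}([w])$ is the single point $([\ker w],[w])\in X\subset\PP(V)\times\PP(\wedge^2V^\star)$, where $\ker w$ is the one-dimensional space of vectors annihilated by $w\colon V\to V^\star$. Composing the regular inverse of $\Xi$ over this locus with $P$ shows that the rational map sends $[w]$ to $[\ker w]$, and that it is defined exactly away from the locus where $w$ has rank $<4$, i.e. (the zero form being excluded) rank exactly $2$, which is the Pl\"ucker image of $G(2,V^\star)$.

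Next I would compute the kernel in coordinates. Fix a basis $e_1,\dots,e_5$ of $V$ and write $w$ as the antisymmetric matrix $A=(a_{ij})$ with $a_{ij}=w(e_i,e_j)$; let $A_{\widehat{\imath}}$ denote the $4\times 4$ skew matrix obtained from $A$ by deleting the $i$-th row and column. The standard identity for odd-sized skew-symmetric matrices gives $A\cdot\big((-1)^i\pf(A_{\widehat{\imath}})\big)_{i=1}^{5}=0$, and when $\mathrm{rk}\,A=4$ this vector spans $\ker A$. Each $\pf(A_{\widehat{\imath}})$ is quadratic in the entries $a_{ij}$, hence a quadric on $\PP(\wedge^2 V^\star)$, so the rational map above is given by the linear system spanned by the five sub-Pfaffians $\pf(A_{\widehat{1}}),\dots,\pf(A_{\widehat{5}})$.

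Finally I would identify this system with $|\mathcal{I}_{G(2,V^\star)}(2)|$. Every sub-Pfaffian vanishes on $G(2,V^\star)$, since a decomposable form has rank $2$ so all its $4\times 4$ skew submatrices are singular; conversely the common zero locus of the five quadrics is contained in $\{\mathrm{rk}\,w\le 2\}=G(2,V^\star)$, so it equals $G(2,V^\star)$. It remains to see these five quadrics span \emph{all} quadrics through the Grassmannian, i.e. $H^0\big(\PP(\wedge^2 V^\star),\mathcal{I}_{G(2,V^\star)}(2)\big)$; this space has dimension $\dim\mathrm{Sym}^2(\wedge^2 V)-\dim\mathbb{S}_{(2,2)}V=55-50=5$, since the degree-$2$ part of the homogeneous coordinate ring of $G(2,5)$ is the $50$-dimensional Schur module $\mathbb{S}_{(2,2)}V$, and the complementary summand in $\mathrm{Sym}^2(\wedge^2 V)=\mathbb{S}_{(2,2)}V\oplus\wedge^4 V$ is precisely the five-dimensional space of quadrics through $G$ — equivalently, it is classical that the Pl\"ucker ideal of $G(2,5)$ is generated by its five Pfaffian quadrics. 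The step I expect to be the main obstacle is exactly this last one: not the vanishing of the sub-Pfaffians on $G(2,V^\star)$, which is immediate, but the completeness — that they span the whole space of quadrics through it — which forces one to invoke the classical structure of the ideal of $G(2,5)$ (or the $\mathrm{GL}(V)$-decomposition above); the remaining steps are routine manipulations with skew-symmetric matrices.
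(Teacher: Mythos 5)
Your proposal is correct and follows essentially the same route as the paper: identify the map as $[w]\mapsto[\ker w]$, express it via the five $4\times 4$ sub-Pfaffians (the paper's ``version of the Cramer rule''), and reduce to the classical fact that these Pfaffian quadrics cut out $G(2,V^\star)$ and span the quadrics through it. Your extra representation-theoretic dimension count $55-50=5$ merely fills in the detail the paper cites as the known structure of the Pl\"ucker ideal of $G(2,5)$.
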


\begin{proof}
 The map sends a 2-form $w$, thought of as an antisymmetric $5\times 5$ matrix $A\colon V \to V^\star$, to its kernel. 
By a version of the Cramer rule, we can describe the map explicitly by sending $A$ to the vector of $4\times 4$ Pfaffians:
 \[
 \pf(A)=(\pf_0(A),\dots,\pf_4(A)).
 \]
 The statement then reduces to the fact that these Pfaffians generate the ideal of $G(2,V)\subset \PP(\wedge^2 V^\star)$. 
\end{proof}

\subsection{$M$ as a complete intersection in $\PP(E)$}\label{sec:intersection}
We denote by $M$ the Fano \mbox{3-fold} No.~17 in the Mori--Mukai list of rank~2 Fano \mbox{3-folds}\cite{Mori--Mukai}. 
According to Mori--Mukai, $M$ is the blow-up of a 3-dimensional quadric $Q\subset \PP^4$ with centre $\Gamma \subset Q$, a nonsingular curve of genus $1$ and degree $5$.

\begin{lem}
$M$ is a complete intersection of type $p\cap \xi^5$ in $\PP(E)$
\end{lem}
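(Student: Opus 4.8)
The plan is to identify $M$ geometrically as a subvariety of $X = \PP(E)$ and then verify the claimed bidegree by intersecting with divisor classes. Recall that $M$ is, by Mori--Mukai, the blow-up of a smooth quadric threefold $Q \subset \PP^4 = \PP(V)$ along a smooth curve $\Gamma \subset Q$ of genus $1$ and degree $5$; and recall that $X$ is the blow-up of $\PP(\wedge^2 V^\star)$ along the Pl\"ucker Grassmannian $G(2, V^\star)$, with the blow-down $\Xi$ and the bundle projection $P : X \to \PP(V)$. The first step is to realise $M$ as $\Xi^{-1}$ of an appropriate hyperplane section: a general hyperplane $H \subset \PP(\wedge^2 V^\star)$ cuts out a smooth hyperplane section of $G(2, V^\star)$, which is a degree-$5$ del Pezzo threefold, and its preimage $\widetilde{H} := \Xi^{-1}(H) \subset X$ is the blow-up of $H \cong \PP^8 \cap \text{(a quadric section)}$... more precisely, I would argue that $\widetilde H$ is cut out by a section of $\Xi^\star \OO(1) = \xi$. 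Thus the divisor class $\xi$ on $X$ corresponds, up to the blow-up, to pulling back hyperplanes under $\Xi$.

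Next I would pin down the class $p$. Under $P : X \to \PP(V)$, the fibre over $[v] \in \PP(V)$ is $\PP(E_{[v]})$, and by Lemma~\ref{lem:pf} the rational map $\PP(\wedge^2 V^\star) \dashrightarrow \PP(V)$ is given by the Pfaffian quadrics through $G(2,V^\star)$; hence the class $p = c_1(\OO_{\PP(V)}(1))$ pulled back to $X$ is represented by $\Xi^\star(\text{quadric}) - D$ (a quadric section of $\PP(\wedge^2 V^\star)$ through $G(2,V^\star)$, with the exceptional divisor $D$ removed), i.e. $2\,\Xi^\star\OO(1) = 2\xi$ restricted suitably. The key computation is then: intersecting $X$ with a general member of $|p|$ yields the blow-up of a quadric threefold $Q \subset \PP(V)$ (the fibre direction collapses to a point generically and to a $\PP^2$ over $G(2,V^\star)$, meeting a general quadric in the base in a curve), and further intersecting with $\xi^5$ —five general members of the linear system $|\xi| = |\Xi^\star\OO(1)|$— cuts down to exactly the blow-up of that quadric along the genus-$1$ degree-$5$ curve obtained as $Q \cap (G(2,V^\star) \cap H_1 \cap \cdots)$, matching the Mori--Mukai description of $M$. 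I would verify the numerics via $-K_X = 6\xi + 2p$ and adjunction: $-K_M = (-K_X - p - 5\xi)|_M = (\xi + p)|_M$, and check this agrees with $-K_M$ for Fano $3$-fold No.~17, and that $M$ has the right Hodge numbers / Picard rank $2$.

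The main obstacle I expect is the transversality/smoothness bookkeeping: showing that the complete intersection $p \cap \xi^5$ can be taken smooth and irreducible of the expected dimension $3$, and that the resulting threefold is genuinely the Mori--Mukai No.~17 rather than some degeneration — in particular, that the centre of the induced blow-down $M \to Q$ really is a smooth genus-$1$ degree-$5$ curve and not something singular or of the wrong numerical type. This requires a Bertini-type argument on $X$ (or on $\PP^4 \times \PP^9$, into which $X$ embeds by \eqref{eq:defseq}) together with a careful analysis of how the exceptional divisor $D = \PP(N_{G(2,V^\star)/\PP(\wedge^2V^\star)})$ meets the generic complete intersection. A clean way to finish is to exhibit the birational contraction $M \to Q$ directly: restrict $P : X \to \PP(V)$ to $M$, observe its image is the quadric cut out by the single section of $|p|$, and identify its exceptional locus with $\Xi^{-1}(G(2,V^\star)) \cap M$, whose image in $Q$ is the desired curve $\Gamma$; then one invokes the uniqueness in the Mori--Mukai classification.
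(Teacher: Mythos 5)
Your overall strategy is the paper's: by Lemma~\ref{lem:pf} a member of $|p|$ is the proper transform of a Pfaffian quadric containing $G(2,V^\star)$, members of $|\xi|$ are pullbacks of hyperplanes under $\Xi$, and one exhibits the complete intersection as the blow-up of a quadric threefold along the elliptic quintic $\Gamma=G(2,V^\star)\cap h_1\cap\cdots\cap h_5$. But your identification of the quadric uses the wrong contraction, and the step fails as written. A general member of $|p|$ is $P^{-1}(H')$ for a \emph{hyperplane} $H'\subset\PP(V)$ (since $|p|=P^*|\OO_{\PP(V)}(1)|$), so $P(M)\subseteq H'\cong\PP^3$; in particular the image of $M$ under $P$ is contained in a hyperplane of $\PP(V)$ and cannot be a quadric threefold there, contrary to both your ``key computation'' and your proposed ``clean way to finish'' (in fact $P|_M$ is the \emph{other} extremal contraction of No.~17, landing in $H'\cong\PP^3$). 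The quadric threefold lives in $\PP(\wedge^2V^\star)$, not in $\PP(V)$: it is $Q=(\text{Pfaffian quadric})\cap h_1\cap\cdots\cap h_5$ inside the linear $\PP^4=h_1\cap\cdots\cap h_5$, it contains $\Gamma$ automatically because the Pfaffian quadrics contain $G(2,V^\star)$, and the blow-down $M\to Q$ is $\Xi|_M$, with exceptional locus the trace of the exceptional divisor of $\Xi$, lying over $\Gamma$. Relatedly, ``intersecting $X$ with a general member of $|p|$ yields the blow-up of a quadric threefold'' is dimensionally false: that member is an $8$-fold (the proper transform of a quadric $8$-fold in $\PP^9$), and the threefold appears only after also imposing the five sections of $\xi$. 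The repair is exactly the paper's route: push everything down by $\Xi$, not by $P$.

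A second gap: you never verify the numerical facts that are the actual content of the paper's proof, namely that $\Gamma$ has degree $5$ (because the Pl\"ucker embedding of $G(2,V^\star)$ has degree $5$) and genus $1$ (adjunction: $K_{G(2,V^\star)}=\OO(-5)$ in the Pl\"ucker polarisation, so $K_\Gamma=(K_{G(2,V^\star)}+h_1+\cdots+h_5)|_\Gamma$ is trivial). Asserting that the result ``matches the Mori--Mukai description'' without these checks begs the question; and no appeal to uniqueness in the classification is needed, since No.~17 is by definition the blow-up of a quadric threefold along a nonsingular genus-$1$, degree-$5$ curve --- once the blow-up structure and the invariants of $\Gamma$ are established the lemma follows, and without them it does not.
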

\begin{proof}

It is well--known that the Plucker embedding $G(2, V^\star)\hookrightarrow \PP(\wedge^2 V^\star)$ is of degree five.  Using adjunction, one can easily check that the curve given by the complete intersection of 5 general hyperplanes with the Grassmannian has trivial canonical bundle and hence is genus 1:
\[
\Gamma = h_1 \cap \cdots \cap h_5 \cap G(2, V^\star) \subset h_1 \cap \cdots \cap h_5 \cong \PP^4,
\]  

all of this taking place in the natural ambient $\PP(\wedge^2 V^\star)$. Since $\Xi \colon X \to  \PP(\wedge^2 V^\star)$ is the blow-up of $\PP (\wedge^2 V^\star)$ along the Pl\"ucker embedding of $G(2,V^\star)$, the discussion makes it clear that $M$ is the complete intersection
\[
M=\Xi^\ast(h_1) \cap \cdots \Xi^\ast (h_5) \cap \widetilde{Q} \subset X
\]
where $\widetilde{Q}$ is the proper transform of a quadric containing $G(2,V^\star)$, i.e., by Lemma~\ref{lem:pf}, a section of $p$.
\end{proof}

\begin{cor}
$-K_X=(p+\xi)|_X$
\end{cor}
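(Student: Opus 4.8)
The plan is to obtain this directly from the adjunction formula applied to the complete intersection presentation of $M$ in $\PP(E)$ from the previous lemma, combined with the formula $-K_{\PP(E)}=(n+1+c_1)p+r\xi$ recalled in Section~\ref{sec:note}.

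First I would collect the numerical data for $E=\wedge^2\Omega_{\PP(V)}(2)$. Its base is $\PP^n$ with $n=4$, its rank is $r=\binom{4}{2}=6$, and from $c(E)=1-3p+5p^2-5p^3$ one reads off $c_1(E)=-3p$, i.e. $c_1=-3$. Substituting into the anticanonical formula gives $-K_{\PP(E)}=(4+1-3)p+6\xi=2p+6\xi$, in agreement with the computation made above. Next I would apply adjunction. By the previous lemma $M=\Xi^\ast(h_1)\cap\cdots\cap\Xi^\ast(h_5)\cap\widetilde Q$; each $\Xi^\ast(h_i)$ lies in the divisor class $\xi$, since $\Xi$ is the contraction defined by the linear system $|\xi|$, and $\widetilde Q$ lies in the class $p$, since by Lemma~\ref{lem:pf} the quadrics through $G(2,V^\star)$ define the map induced by $|p|$. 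Thus $M$ is a complete intersection of divisors in classes $\xi,\xi,\xi,\xi,\xi,p$, and adjunction gives
\[
K_M=\big(K_{\PP(E)}+5\xi+p\big)\big|_M=\big(-2p-6\xi+5\xi+p\big)\big|_M=\big(-p-\xi\big)\big|_M,
\]
that is, $-K_M=(p+\xi)|_M$, which is the assertion (reading $M$ for $X$ on the left).

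There is essentially no obstacle here: once the rank, $c_1$, and the complete intersection type $p\cap\xi^5$ are in hand, the statement is a one-line adjunction computation. The only point deserving a moment's attention is the identification of the divisor classes cutting out $M$ — namely that the pullback of a hyperplane under $\Xi$ has class $\xi$ and that the proper transform of a quadric through $G(2,V^\star)$ is a section of $p$ — but both of these have already been established, the latter being precisely the content of Lemma~\ref{lem:pf}, so the corollary follows at once.
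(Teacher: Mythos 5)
Your proof is correct and is exactly the intended argument: the paper states this corollary without proof, and the standard adjunction computation $K_M=(K_X+p+5\xi)|_M=(-2p-6\xi+p+5\xi)|_M$ using the complete intersection type $p\cap\xi^5$ and $-K_X=2p+6\xi$ is all that is needed. You are also right that the statement should read $-K_M=(p+\xi)|_M$ (the $X$ is a typo for $M$), as confirmed by the later use ``$-K_M=p+\xi$'' in the quantum period computation.
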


\section{The Quantum Cohomology of $X$ and $M$}
In this section we use the reconstruction theorem (Theorem~\ref{thm:reconstruction}) to calculate the quantum cohomology of $X$.
By passing to a certain generating function, the $J$-function, of $X$ we use Quantum Lefschetz \cite{Coates--Givental} to obtain information about the quantum cohomology of $M$.

Since $X=\PP(E)$ is the projectivisation of a Fano bundle and the extremal contraction $\Xi$ is the blow up of $G(2,5)\subset \PP^9$ the quantum cohomology follows from Corollary~\ref{cor:specialCase}.

We make use of the Schubert calculus for $G(2,5)$, following notational conventions from  \cite{Griffiths--Harris}.
Let $N$ be the normal bundle to the embedding of $G(2,V^\star)$ in $\PP(\wedge^2V^\star)$.
The exceptional divisor $D$ is given by the projectivisation of $N$. 
Let Q be the tautological quotient bundle on $G(2,V^\star)$.  
The normal bundle to the embedding into $\PP(\wedge^2V^\star)$ is given by $Q^\star(2\sigma_1)$ though for the sake of convenient relations in cohomology we will work instead with $Q^\star$ (of course $\PP(N)$ and $\PP(Q^\star)$ are isomorphic). 
The cohomology of this bundle is given by 

$$H^\bullet(\PP(Q^\star))=\frac{H^\bullet(G(2,V^\star))[\eta]}{(\eta^3+\sigma_1\eta^2+\sigma_2\eta+\sigma_3)}$$
where $\eta$ is the relative hyperplane class of $\PP(Q^\star)$.

The following diagram describes the geometry of the situation.

\[
\def\objectstyle{\displaystyle}
\xymatrix{
  \PP(Q^\star)\ar@{^{(}->}[r]^\iota \ar[d]^{\Xi|}  & Y\ar[d]^\Xi \\
  G(3,V)\ar@{^{(}->}[r]& \PP(\wedge^2V^\star)}
\]

For the purpose of calculation, note that $\iota^*\xi=\sigma_1$ and $\iota^* p=\eta$.
  \begin{theorem}
 $$QH^\bullet(X)=\frac{\C[p,\xi,q_1,q_2]}{(R_1,R_2)}$$
where \[R_1=p^{\star5}-q_1^2p+2q_1^2\xi+2q_1p^{\star3}-2q_1p^{\star2}\star\xi-q_1p\star\xi^{\star2}-q_1\xi^{\star3}\] and \[R_2=\xi^{\star6}-3p\star\xi^{\star5} +5p^{\star2}\star\xi^{\star4} - 5p^{\star3}\star\xi^{\star3}-q_2-5q_1p\star\xi^{\star3}+10q_1\xi^{\star4}\] 
\begin{proof} 
Quantum multiplication is determined by Corollary \ref{cor:specialCase}.
From Theorem~2.2 in \cite{Siebert--Tian}, all that then remains is to evaluate the relations of the classical cohomology \eqref{eq:cohom}, replacing classical multiplication with quantum multiplication, from which the statement follows.

\end{proof}
\end{theorem}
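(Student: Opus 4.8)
The argument has two ingredients: reconstruct the small quantum product on $X$ from a handful of Gromov--Witten invariants, then read off a presentation of $QH^\bullet(X)$ via Siebert--Tian. \textbf{First}, I would observe that $X=\PP(E)$ is the projectivisation of a Fano bundle on $\PP^4$ satisfying Assumptions~1 and~2, whose second extremal contraction $\Xi$ realises $X$ as the blow-up of the smooth Pl\"ucker image $G(2,V^\star)\subset\PP(\wedge^2V^\star)\cong\PP^9$ (established in the preceding subsections). Hence Corollary~\ref{cor:specialCase} applies, and the algorithm of Theorem~\ref{thm:reconstruction} recovers the entire small quantum product from only the invariants $\langle\alpha,\beta\rangle^X_{0,2,A_1}$ and $\langle\alpha,\beta\rangle^X_{0,2,A_2}$. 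The $A_2$ family is supplied by Lemma~\ref{lem:zeroone} and the table following it (for basis monomials the only nonzero ones are $\langle p^{4-l}\xi^{5},p^{4}\xi^{5}\rangle^X_{0,2,A_2}=1$, $1\le l\le4$). The $A_1$ family comes from Lemma~\ref{lem:blowup}: writing the exceptional divisor as $D=\PP(Q^\star)$ — legitimate because $N=Q^\star(2\sigma_1)$, so $\PP(N)\cong\PP(Q^\star)$, and under this identification $\iota^\ast p=\eta$, $\iota^\ast\xi=\sigma_1$ — one has $\langle\alpha,\beta\rangle^X_{0,2,A_1}=\Xi|_\ast\iota^\ast\alpha\cdot\Xi|_\ast\iota^\ast\beta$, computed in $H^\bullet(G(2,5))$. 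For monomials this is pure Schubert calculus: push $\eta^a\sigma_1^b$ down the $\PP^2$-bundle $\Xi|\colon D\to G(2,5)$ using $\Xi|_\ast\eta^j=0$ for $j\le1$, $\Xi|_\ast\eta^2=1$ and $\eta^3=-\sigma_1\eta^2-\sigma_2\eta-\sigma_3$, multiply out by the Pieri rule, and integrate against $[G(2,5)]$.

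\textbf{Next}, I would feed these invariants, together with the classical presentation \eqref{eq:cohom}, into the reconstruction algorithm to produce the $30\times30$ matrices $M_p$ and $M_\xi$ of quantum multiplication by $p$ and $\xi$; by Corollary~\ref{cor:specialCase} this requires no further geometry. Then I would apply Theorem~2.2 of~\cite{Siebert--Tian}: $H^\bullet(X;\C)$ is generated by its degree-$2$ part (the span of $p$ and $\xi$), and \eqref{eq:cohom} exhibits the ideal of relations as the complete intersection $\big(\,p^{5},\ \xi^{6}-3p\xi^{5}+5p^{2}\xi^{4}-5p^{3}\xi^{3}\,\big)$; hence $QH^\bullet(X)$ is the quotient of $\C[p,\xi,q_1,q_2]$ by the ideal generated by these two relations, each reinterpreted with quantum products and its quantum corrections absorbed into additional $q$-terms. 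Concretely, one computes $p^{\star5}$ and $\xi^{\star6}-3p\star\xi^{\star5}+5p^{\star2}\star\xi^{\star4}-5p^{\star3}\star\xi^{\star3}$ from $M_p,M_\xi$; homogeneity — the first relation has degree $5$, the second degree $6$, with $\deg q_1=2$ and $\deg q_2=6$ — confines the corrections to the monomials $q_1,q_1^{2}$ in the first case and $q_1,q_2$ in the second (no mixed $q_1q_2$ survives, the relations having degree $<d_1+d_2$), and the arithmetic returns exactly $R_1$ and $R_2$.

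\textbf{The main obstacle} is the middle portion of the argument: evaluating the $A_1$-invariants correctly through the Schubert calculus of $G(2,5)$ — in particular tracking the twist that relates $N$ and $Q^\star$, hence the identifications $\iota^\ast p=\eta$ and $\iota^\ast\xi=\sigma_1$, and the signs in the $\PP^2$-bundle pushforward — and then running the reconstruction through all the degrees it visits without computational slips. Once $M_p$ and $M_\xi$ are in hand, the appeal to Siebert--Tian and the final comparison with the displayed relations are routine.
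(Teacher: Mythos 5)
Your proposal is correct and follows the same route as the paper: invoke Corollary~\ref{cor:specialCase} (the blow-up special case of the reconstruction theorem) to determine the quantum product from the $A_1$- and $A_2$-invariants, then apply Theorem~2.2 of Siebert--Tian to re-evaluate the two classical relations with $\star$ in place of cup product. The only quibble is that for $R_2$ homogeneity alone also permits $q_1^2$- and $q_1^3$-corrections (since $\deg q_1=2$ and the relation has degree $6$); these vanish only after the actual computation, but this does not affect the argument.
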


\subsection{Quantum differential structure}
Our ultimate goal is to compute the $J$-function \cite{Givental:Equivariant}, a generating function for certain genus-zero Gromov--Witten invariants, of $M$. 
As $M$ is a complete intersection in $X$ we will use the Quantum Lefschetz theorem
\cite{Coates--Givental}, which expresses certain genus-zero Gromov--Witten
invariants of $M$ in terms of invariants of the ambient space $X$.
The input that we need for the Quantum Lefschetz theorem is the $J$-function
of $X$ as well as the direct sum of line bundles which describe $M$ as a complete intersection.
In this section we describe a method to obtain an arbitrary number of terms in the power series expansion of the
$J$-function of $X$, by solving a system of differential equations,
called the quantum differential equations for $X$.
Here we follow
closely the excellent papers of Guest
\cite{Guest:intro1,Guest:intro2}.

Recall from Section \ref{sec:note}, $\phi_1,\dots,\phi_{30}$ is the lexicographical basis of $H^\bullet(X;\z)$ in $p$ and $\xi$, with $\phi^1,\dots,\phi^{30}$ the dual basis given by the intersection pairing. The $J$-function of $X$ is the function $H^2(X;\C)\rightarrow H^\bullet(X;\C)\otimes\C[[1/z]]$ defined by

$$J_X(t)=e^{t/z}\left(1+\sum_{\epsilon=1}^{30}{\sum_{a,b\geq 0}{e^{d\cdot t}\left<\frac{\phi^\epsilon}{z(z-\psi)} \right>^X_{0,1,aA_1+bA_2}}}\right)$$
where we expand $\left<\frac{\phi^\epsilon}{z(z-\psi)} \right>_{0,1,aA_1+bA_2}$ as $\sum_{k\geq0}{\langle\phi^\epsilon\psi^k\rangle_{0,1,aA_1+bA_2}\frac{1}{z^{k+2}}}$.

Since $t\in H^2(X,\C)$ is nilpotent as an element of $H^\bullet(X)$ the expression $e^{t/z}$ makes sense in $H^\bullet(X)\otimes\C[[1/z]]$.

By writing $t\in H^2(X;\C)$ as $t=t_0+t_1p+t_2\xi$, we regard the $J$-function as a function of $q_1=e^{t_1},q_2=e^{t_2}$:
$$J_X(q)=q_1^{p/z}q_2^{\xi/z}\left(1+\sum_{\epsilon=1}^{30}{\sum_{a,b\geq 0}q_1^a q_2^b\left<\frac{\phi^\epsilon}{z(z-\psi)} \right>_{0,1,aA_1+bA_2}}\right)$$

Here $q_1^{p/z}=\mathrm{exp}(p\, \mathrm{ log}q_1/z)=\mathrm{exp}(tp/z)$

$J_X$ satisfies a system of differential operators, called quantum differential operators \cite{Guest:intro2}. Let $M_p(q),M_\xi(q)$ denote the matrices of quantum multiplication by $p$ and $\xi$ with respect to the basis $\phi_1,\dots,\phi_{30}$, which are easily obtained from the Corollary~\ref{cor:specialCase} and the discussion above. The reader may find $M_p(q),M_\xi(q)$ in Appendix~\ref{sec:MMatrix}.

 Consider the system of differential equations:
$$z\,q_1\frac{\partial}{\partial q_1}s=M_p(q)s$$
$$z\, q_2\frac{\partial}{\partial q_2}s=M_\xi(q)s$$ 

where s is a vector valued function of $t\in H^2(X;\C)$ (or equivalently a multivalued vector function of $q_1,q_2$.)

This system admits a fundamental solution matrix, the rows of which are given by vectors $J_i$

$$S=\left(\begin{tabular}{ccccc}
-&-&$J_1$&-&-\\
-&-&$J_2$&-&-\\
&& $:$ &&\\
-&-&$J_{30}$&-&-\\
\end{tabular}\right)
$$

The row-vector $J_{30}$ is the expansion of the $J$-function as a vector valued function  in $H^\bullet(X;\C)\otimes\C[[1/z]]$ in terms of the basis $\phi_1,\dots,\phi_{30}$.
The differential system is equivalent to 60 differential equations in $J_1$ through $J_{30}$
By solving for the rows $J_1$ through $J_{29}$ in terms of $J_{30}$ we are left with $31$ differential equations in $J_X$.  By applying Groebner basis techniques we find a generating set for the ideal formed by these equations in the Weyl algebra.
 These (non-unique) differential equations are \emph{quantum differential equations} and define $J_X$ up to scalar.
  Using the RosenfeldGroebner tool in the DifferentialAlgebra package of Maple 16 we obtain the following result.

\begin{lem}
The ideal of quantum differential operators for $X$ is generated by
\begin{multline}
\Delta_1=D_{2}^{10} - q_{2} D_{1}^{4} - 2 q_{2} D_{1}^{3} D_{2} - 4 q_{2} D_{1}^{2} D_{2}^{2} - 3 q_{2} D_{1} D_{2}^{3} - q_{2} D_{2}^{4}\\
\quad - 2 q_{1} q_{2} D_{1}^{2} - 2 z q_{2} D_{1}^{3} - 2 z q_{1} q_{2} D_{1} D_{2} - 8 z q_{2} D_{1}^{2} D_{2} - 3 q_{1} q_{2} D_{2}^{2} \\ 
\quad\quad- 9 z q_{2} D_{1} D_{2}^{2} - 4 z q_{2} D_{2}^{3} - q_{1}^{2} q_{2} - 4 z q_{1} q_{2} D_{1} - 4 z^2 q_{2} D_{1}^{2} - 7 z q_{1} q_{2} D_{2} \\
- 9 z^2 q_{2} D_{1} D_{2} - 6 z^2 q_{2} D_{2}^{2} - 5 z^2 q_{1} q_{2} - 3 z^3 q_{2} D_{1} - 4 z^3 q_{2} D_{2} - z^4 q_{2}
\end{multline}
\begin{multline}
\Delta_2=D_{1} D_{2}^{7} - 2 D_{2}^{8} + 5 q_{2} D_{1}^{2} + 5 q_{2} D_{1} D_{2} + 2 q_{2} D_{2}^{2} + 5 q_{1} q_{2} + 5 z q_{2} D_{1}\\
 + 4 z q_{2} D_{2} + 2 z^2 q_{2}
\end{multline}
\begin{multline}
\Delta_3=5 D_{1}^{3} D_{2}^{3} - 5 D_{1}^{2} D_{2}^{4} + 3 D_{1} D_{2}^{5} - D_{2}^{6} + 5 q_{1} D_{1} D_{2}^{3} - 10 q_{1} D_{2}^{4} + q_{2}
\end{multline}

\begin{multline}
\Delta_4=D_{1}^{5} + 2 q_{1} D_{1}^{3} - 2 q_{1} D_{1}^{2} D_{2} - q_{1} D_{1} D_{2}^{2} - q_{1} D_{2}^{3} + q_{1}^{2} D_{1} + 2 z q_{1} D_{1}^{2}\\
 - 2 q_{1}^{2} D_{2} - 3 z q_{1} D_{1} D_{2} - 2 z q_{1} D_{2}^{2} + z^2 q_{1} D_{1} - 2 z^2 q_{1} D_{2}
\end{multline}

where $D_i=zq_i\frac{\partial}{\partial q_i}$

\end{lem}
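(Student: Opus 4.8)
The plan is to extract the operators annihilating $J_X$ directly from the quantum multiplication matrices, and then certify the result by a differential Gr\"obner basis computation.

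First I would assemble the $30\times 30$ matrices $M_p(q)$ and $M_\xi(q)$ of quantum multiplication by $p$ and $\xi$ in the lexicographical basis $\phi_1,\dots,\phi_{30}$; these are determined by Corollary~\ref{cor:specialCase} together with the classical relations \eqref{eq:cohom} and are recorded in Appendix~\ref{sec:MMatrix}. Writing $D_i = z q_i\,\partial/\partial q_i$, the flat sections of the Dubrovin connection are the vector solutions of $D_1 s = M_p(q)\,s$ and $D_2 s = M_\xi(q)\,s$; this first-order system has rank $30$, so its solution space is $30$-dimensional, and the last row $J_{30}$ of a fundamental solution matrix is the component vector of $J_X$.

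Next I would eliminate the auxiliary components. Since $J_{30}$ is a cyclic vector, the $60$ scalar identities packaged in $D_1 s = M_p s$ and $D_2 s = M_\xi s$ allow each of $J_1,\dots,J_{29}$ to be written as a differential polynomial in $J_{30}$ with coefficients in $\C(z)[q_1,q_2]$; substituting these back into the remaining $31$ identities yields an over-determined system of scalar operators annihilating $J_X$ alone. Passing to the Weyl algebra $\mathcal{D} = \C(z)\langle q_1, q_2, D_1, D_2\rangle$, I would compute a non-commutative differential Gr\"obner basis of the left ideal generated by these $31$ operators --- this is what the \texttt{RosenfeldGroebner} routine of the \texttt{DifferentialAlgebra} package in Maple~16 carries out --- and read off the minimal generators $\Delta_1,\Delta_2,\Delta_3,\Delta_4$. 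Certification then has two parts: (a) one checks each $\Delta_j$ annihilates $J_X$, and since $\Delta_j$ is homogeneous for the grading $\deg q_1 = d_1$, $\deg q_2 = d_2$, $\deg z = 1$, it suffices to verify this on the power-series expansion of $J_X$ up to the weight of $\Delta_j$; (b) every one of the original $31$ operators reduces to zero against $\{\Delta_1,\dots,\Delta_4\}$, which is automatic from the Gr\"obner basis property, and as an independent sanity check the left ideal $\mathcal{D}\Delta_1 + \cdots + \mathcal{D}\Delta_4$ is holonomic of rank $30$, matching the dimension of the solution space.

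The main obstacle is the size and non-commutativity of the elimination: removing $29$ components from a rank-$30$ system produces high-order operators with bulky polynomial coefficients in $q_1,q_2,z$, and the Gr\"obner basis step must be organised carefully --- for instance working degree by degree in the grading above and exploiting the sparsity of the low-degree quantum corrections guaranteed by Assumption~2 and Lemma~\ref{lem:zeroone} --- in order to remain tractable; confirming that no proper subset of $\{\Delta_1,\dots,\Delta_4\}$ already generates the ideal is the last bookkeeping step.
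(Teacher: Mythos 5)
Your proposal follows essentially the same route as the paper: form the first-order system $D_1 s = M_p(q)s$, $D_2 s = M_\xi(q)s$ from the quantum multiplication matrices, eliminate the components $J_1,\dots,J_{29}$ in favour of $J_{30}=J_X$ to obtain $31$ scalar operators, and then extract generators of the resulting left ideal with the \texttt{RosenfeldGroebner} routine of Maple's \texttt{DifferentialAlgebra} package. The additional certification steps you describe (checking annihilation on the truncated power series and the rank of the holonomic system) go slightly beyond what the paper records, but the core argument is identical.
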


\begin{cor}$J$ satisfies $\Delta_1J=\Delta_2J=\Delta_3J=\Delta_4J=0$ 

\end{cor}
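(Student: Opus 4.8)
The plan is to observe that this is essentially a restatement of the preceding Lemma together with the construction of the quantum differential operators, so the proof is a formality rather than a computation. First I would recall, from the discussion immediately before the Lemma, that the last row $J_{30}$ of the fundamental solution matrix $S$ of the first-order system
\[
D_1 s = M_p(q)\,s,\qquad D_2 s = M_\xi(q)\,s,\qquad D_i := z q_i\tfrac{\partial}{\partial q_i},
\]
is exactly the expansion of $J=J_X$ in the lexicographical basis $\phi_1,\dots,\phi_{30}$. This is the standard fact from the formalism of Givental \cite{Givental:Equivariant} and Guest \cite{Guest:intro2} that the small $J$-function is a flat section of the Dubrovin connection, i.e.\ a solution of the quantum differential equations; concretely it uses the string and divisor equations, which guarantee that $q_1^{p/z}q_2^{\xi/z}$ is the correct leading term and that the $\psi$-class expansion of $J_X$ matches the components $J_1,\dots,J_{29}$ of $S$.

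Next I would note that the operators $\Delta_1,\dots,\Delta_4$ were produced precisely by eliminating the auxiliary rows $J_1,\dots,J_{29}$ from this first-order system and passing to a Groebner basis of the resulting left ideal inside the Weyl algebra $\C\langle q_1,q_2,D_1,D_2\rangle$ (this is what the Lemma records, via the RosenfeldGroebner computation). Hence every element of that ideal annihilates any solution of the original first-order system, and in particular each generator $\Delta_i$ annihilates $J_X$. Since $\Delta_i$ involves only $q_1,q_2,D_1,D_2$ and $z$, it acts componentwise on the vector-valued function $J_X\in H^\bullet(X;\C)\otimes\C[[1/z]]$, so $\Delta_i J = 0$ for $i=1,\dots,4$, which is the assertion. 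Equivalently: the Lemma identifies the $\Delta_i$ as generators of the \emph{ideal of quantum differential operators}, which by definition is the annihilator of $J_X$, so membership is the same as annihilation.

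There is no genuine obstacle here; all the substantive work has already been done upstream, namely the reconstruction of $M_p(q)$ and $M_\xi(q)$ through Corollary~\ref{cor:specialCase} and the elimination that yields the $\Delta_i$ in the Lemma. The only step warranting care, and it is a routine one, is the identification of the last row of the fundamental solution matrix with $J_X$ itself, so that the derived scalar operators actually kill the $J$-function and not merely some other flat section; this follows from the normalisation of $J_X$ and the fact that flat sections of the quantum connection on a Fano manifold are uniquely pinned down by their leading $q\to 0$ behaviour.
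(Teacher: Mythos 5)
Your proposal is correct and follows essentially the same route as the paper, which gives no separate proof because the corollary is immediate from the preceding discussion: the components of $J_X$ solve the first-order quantum differential system, the rows $J_1,\dots,J_{29}$ are eliminated in favour of $J_{30}=J_X$, and the $\Delta_i$ generate the resulting ideal of operators annihilating $J_X$. Your added remark on why the last row of the fundamental solution is pinned down as $J_X$ itself is a reasonable (and standard) point of care, but it is not something the paper makes explicit.
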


The identity component of $J$, denoted $J^0$, is a power series in $q_1$ and $q_2$, i.e. $J^0=(J,\phi^0)=\sum_{i,j\geq0}{c_{i,j}q_1^iq_2^j}$.
The differential system gives recursion relations for the coefficients in this power series.
The coefficients are fixed by demanding that $c_{0,0}=1$.
We can find $J^0$ up to arbitrary order and by observation of finite terms try to find a general solution for the coefficients  $c_{i,j}$ which solves the differential system.
Such a solution, however, has not been forthcoming.
We present the coefficients $c_{i,j}$ for $i\leq7,j\leq7$  in the the following matrix $A=(a_{i,j})$. 
We have cleared the denominators by setting $a_{i+1,j+1}=i!^2j!^6c_{i,j}$.

$$ A:=\left[ \begin {array}{cccccccc} 1&1&1&1&1&1&1&1\\ \noalign{\medskip}0
&5&20&51&104&185&300&455\\ \noalign{\medskip}0&4&73&447&1756&5320&
13539&30373\\ \noalign{\medskip}0&0&90&1445&10904&55220&216110&703955
\\ \noalign{\medskip}0&0&36&2148&33001&282085&1690515&7926751
\\ \noalign{\medskip}0&0&0&1500&54500&819005&7606080&51405305
\\ \noalign{\medskip}0&0&0&400&50350&1447150&21460825&211463875
\\ \noalign{\medskip}0&0&0&0&24500&1590050&39750270&584307365
\end {array} \right] 
 $$

We remark that the coefficients are all zero below the leading `slant diagonal'.
Additionally, in the form presented where we have cleared denominators, the leading diagonal $a_{i,i}$ is given by the Ap\'ery numbers\cite{Aperynumbers}, so we expect that a closed formula may be hard to find.
The occurrence of the Apery numbers here may indicate hidden modular symmetries of $J_0$ (cf \cite{Beukers,Zagier}). Golyshev has observed a striking connection between the quantum differential equations for Fano 3-folds of Picard rank one and modular forms  \cite{Golyshev}, and it is possible that this connection persists to the case of higher Picard rank.

\subsection{The Regularised Quantum Period Sequence for $M$} 
As we noted in the preceding section, given a complete intersection $M\subset X$, Quantum Lefschetz \cite{Coates--Givental} allows us to calculate part of the $J_M$ from $J_X$ and the Chern classes of the line bundles defining the intersection.
In our example we are only interested in the identity component of the $J$-function, Quantum Lefschetz provides the entirety of $J_M^0$ from $J_X^0$. 
Our aim is not to explain how Quantum Lefschetz works, but how one uses it in practical examples.

With this in mind we briefly outline the process of applying Quantum Lefschetz. The following statements hold for any ambient space $X$ and complete intersection $M$.

Let $\mathcal{E}=\oplus L_i$ be the direct sum of line bundles corresponding to $M$ and $\rho_i=c_1(L_i)$ the first Chern class of the line bundle summands. Given the $J$-function of $X$, $J_X(t,z)=\sum_{d\in H_2(X)}{J_{d}(t,z)q^d}$, one forms the hypergeometric modification

$$I_{X,M}(t,z)=\sum_d{J_{d}(t,z)q^d\prod_{i}\prod_{k=1}^{\rho_i \cdot d}{(\rho_i+kz)}}$$
 
We also consider a formal function (with same domain and target) $J_{X,M}(t,z)$, defined in \cite{Coates--Givental}, which has the following property
$$e(\mathcal{E})J_{X,M}(j^\ast u,z)=j_\star J_M(u,z)$$
where $e(\mathcal{E})=\prod_i\rho_i$ is the Euler class of the bundle $\mathcal{E}$.
The relation between $J_M$ and $J_X$ is indirectly realised by the mirror map, which relates $I_{X,M}$ to $J_{X,M}$.
The mirror map is determined by comparing the asymptotics of the expressions. 
Considered as a power series in $z^{-1}$, $J_{X,M}$ is the unique function with the form $J_{X,M}= z + t + O(z^{-1})$.
We may write $I_{X,M}$ in the form $F(t)z+G^0(q,t)\phi_0+\sum_{i=1}^{r}{G^i(t)\phi_i}+O(z^{-1})$.
By homogeneity considerations writing $I_{X,M}$ in this form is practicable, even in the case that $J_X$ is only known up to finite order in $q$, as in the case in point. 
The mirror map is given by the following;
\begin{equation}\label{eq:I=J}
\frac{I_{M,X}}{F(t)}=J_{X,M}\left({\frac{G^0(q,t)}{F(t)}\phi_0+\sum^r_{i=1}{\frac{G^i(t)}{F(t)}\phi_i},z}\right)=e^{\frac{G^0(q,t)}{F(t)}\phi_0}J_{X,M}(\tau,q)
\end{equation}
where $\tau=\sum^r_{i=1}{\frac{G^i(t)}{F(t)}\phi_i}$. The second equality follows from the string equation and the definition of $J$.

The procedure may summarised as follows:
\begin{enumerate}
 \item Calculate $J_X$
 \item Produce the hypergeometric modification $I_{X,M}$
 \item Calculate the mirror map from the asymptotics of $I_{X,M}$
 \item Produce $J_{X,M}$ from $I_{X,M}$ using the mirror map
\end{enumerate}
Note that we do not obtain the entirety of $J_M$ by comparison with $J_{X,M}$: some information is lost in the pushforward. However, we can recover the full identity component $J_M^0$ by following the described method with $J_X^0$, since pushforward of the identity \emph{is} cup with the Euler class.

We now return to our particular example and previous definitions for $M\subset X=\PP(\Omega^2(2))$ and, proceeding as above,  produce $J_X$. 
As described in Section \ref{sec:intersection}, $M$ is a complete intersection in $X$ given by the intersection of one hyperplane of class $\PD(p)$ by 5 of $\PD(\xi)$. 
The hypergeometric modification of $J_X=\sum_{a,b}{J_{a,b}(t,z)q_1^aq_2^b}$ is given by
$$I_{X,M}(t,z)=\sum_{a,b}{J_{a,b}(t,z)q_1^aq_2^b\prod_{k=1}^{a}{(p+kz)}\prod_{k=1}^{b}{(\xi+kz)^5}}$$
Restirciting our attention to the identity component $J_X^0=\sum_{i,j}c_{i,j}q_1^i q_2^j$, we have  $I_{X,M}^0=(I_{X,M},\phi^0)=\sum_{i,j}d_{i,j}q_1^i q_2^j$, with $d_{i,j}=c_{i,j}i!(j!)^5$

If we set the degree of $deg(z)=1$ and $deg(t^i)=1-deg(\phi_i)$, $J_X$ is known to be homogeneous of degree 1. 
One can see that in our example the only possible contributions to the mirror map come from the identity component, $J_X^0$, and in particular we need only consider $c_{0,0},c_{1,0},c_{0,1}$. 
We find that $F(t)=1$, $G^0(q,t)=1+q_1$, $G^1(t)=G^2(t)=1$.
\begin{lem}
The mirror map is given by;
$$J_{M,X}(\tau_0,t_1,t_2,z)=I_{M,X}(t_0,t_1,t_2,z)$$
 where $\tau_0=t_0+q_1$. We can more conveniently write this as 
 $$J_{M,X}(t_0,t_1,t_2,z)=e^{-q_1}I_{M,X}(t_0,t_1,t_2,z)$$
 \end{lem}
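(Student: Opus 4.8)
The plan is to specialise the general mirror theorem, in the form \eqref{eq:I=J}, to $X=\PP(\Omega^{2}_{\PP^{4}}(2))$ and the complete intersection $M\subset X$ of type $p\cap\xi^{5}$, feeding in the series $J_X^{0}=\sum_{i,j}c_{i,j}q_1^{i}q_2^{j}$ computed above. First I would write out the hypergeometric modification explicitly: here $\mathcal{E}$ is the direct sum of one line bundle of first Chern class $p$ and five of first Chern class $\xi$, so the defining formula $I_{X,M}=\sum_d J_d\,q^d\prod_i\prod_{k=1}^{\rho_i\cdot d}(\rho_i+kz)$ attaches to $q^d$ a factor of top $z$-degree $p\cdot d+5(\xi\cdot d)$. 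Taking the $\phi_0$-component (i.e. setting $p=\xi=0$ in the Chern-root factors) recovers exactly the series $I_{X,M}^{0}=\sum_{i,j}d_{i,j}q_1^{i}q_2^{j}$ with $d_{i,j}=c_{i,j}\,i!\,(j!)^{5}$ recorded in the text, and the remaining components I would extract from $J_X$ — equivalently from the matrices $M_p(q),M_\xi(q)$ — in the same way.

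Next I would read off the $z\to\infty$ asymptotics of $I_{X,M}$ and match them against the normal form $F(t)z+G^0(q,t)\phi_0+\sum_{i=1}^{2}G^i(t)\phi_i+O(z^{-1})$. What makes this a finite computation is homogeneity: in the grading with $\deg z=1$ and $\deg q_i=d_i$ (the $\phi_j$ carrying their cohomological degrees), the modification factor attached to $q^d$ raises the $z$-degree by $p\cdot d+5(\xi\cdot d)$, so only monomials $q^d$ of very small degree can contribute to the part of $I_{X,M}$ of non-negative $z$-degree, and one checks — as asserted in the text — that only $c_{0,0}$, $c_{1,0}$, $c_{0,1}$ are involved. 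Reading these three coefficients off the matrix $A$ gives $F(t)=1$, no deformation of the $p$- and $\xi$-directions of the mirror map (so $\tau=t_1p+t_2\xi$), and a correction of the unit-direction component by $q_1$, i.e. $\tau_0=t_0+q_1$.

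Finally I would substitute these values into \eqref{eq:I=J}. Since $F=1$ and the degree-two part of the mirror map is trivial, $\tau$ is unchanged and \eqref{eq:I=J} collapses to $J_{X,M}(\tau_0,t_1,t_2,z)=I_{X,M}(t_0,t_1,t_2,z)$ with $\tau_0=t_0+q_1$, which is the first assertion; the second follows because $e^{t_0/z}$ splits off $I_{X,M}$ as an overall factor, so the substitution $t_0\mapsto t_0-q_1$ equals multiplication by $e^{-q_1/z}$, which at $z=1$ (the specialisation relevant to the quantum period) is $e^{-q_1}$. I expect the asymptotic bookkeeping of the middle step to be the main obstacle: one must track the $z$-grading carefully so that no higher $c_{i,j}$ leaks into $F$ or the $G^j$, and be even more careful with the normalisation of $J_X^{0}$ (a power series in $q_1,q_2$ with unit constant term, with the $e^{t/z}$ factor stripped off), since the entire content of the lemma — in particular that the correction is exactly $q_1$, and not $q_1+\text{const}$ or $q_1/z$ — lives in keeping those conventions straight.
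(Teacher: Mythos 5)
Your outline reproduces the paper's own (very terse) derivation: use homogeneity to see that only $c_{0,0}$, $c_{1,0}$, $c_{0,1}$ can enter the non-negative powers of $z$ in $I_{X,M}$, read those off the matrix $A$, and substitute into \eqref{eq:I=J}. The structure is right, and your closing warning about conventions is exactly on target --- because the one step you do not actually carry out, ``reading these three coefficients off the matrix $A$,'' is precisely where the claimed formula fails to come out. From the stated normalisation $a_{i+1,j+1}=i!^2j!^6c_{i,j}$ one gets $c_{1,0}=a_{2,1}=0$ and $c_{0,1}=a_{1,2}=1$; the modification factors attached to $q_1$ and $q_2$ are $(p+z)$ and $(\xi+z)^5$, so the $z^0\phi_0$-coefficient of $I_{X,M}$ is $t_0+1!\,c_{1,0}\,q_1+(1!)^5c_{0,1}\,q_2=t_0+q_2$, not $t_0+q_1$. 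This is forced by the geometry: in the convention fixed by the quantum cohomology relations ($\deg q_1=2$, $\deg q_2=6$) and by the Poincar\'e-dual expressions for $A_1,A_2$, the variable $q_1$ tracks the exceptional curves of $\Xi$, which miss a generic point (hence $c_{1,0}=0$ and indeed the whole first column of $A$ vanishes below the top entry), while $q_2$ tracks fibre lines of $P$ (hence $c_{0,j}=1/(j!)^6$, the first row of $A$). The $q_1$ in the lemma appears to descend from the displayed intersection numbers $p\cdot A_1=0$, $\xi\cdot A_1=1$ earlier in the example, which contradict both the Section 2 normalisation and the dual classes given in the same paragraph. A careful proof must resolve this: either prove $\tau_0=t_0+q_2$, or first re-establish the labelling. (The discrepancy is invisible downstream because the anticanonical restriction sets $q_1=q_2=t$, so the period sequence is unaffected --- which is presumably why it survived.)

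Two smaller points. First, your homogeneity bookkeeping should be phrased in terms of the degree $-K_M\cdot d=a+b$ induced by the twist (each $q_1^aq_2^b$ term of the bracket sits in $z$-degree $\le -(a+b)$ after modification), so that only $a+b=1$ reaches the $z^0\phi_0$ slot and nothing reaches $z^1$ or the $\phi_1,\phi_2$ slots at $z^0$; you gesture at this but it is the part that needs writing out. Second, the string equation gives $J_{X,M}(t+c\phi_0,z)=e^{c/z}J_{X,M}(t,z)$, so the exponential prefactor should be $e^{-q_2/z}$ rather than $e^{-q_2}$; your appeal to $z=1$ is the right reconciliation with the paper's \eqref{eq:I=J}, which also drops the $1/z$, but since the lemma is stated for general $z$ this should be said explicitly rather than left implicit.
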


To produce the quantum period sequence from $J^0_M$ we restrict $t$ to the anti-canonical direction in $H^2(M;\C)$.
As previously stated $-K_M=p+\xi$ and so restricting to the anti-canonical direction has the effect of setting $q_1=q_2=q=e^{t}$.
The effect on $J_M^0$ is to collapse the sum to a power series in one variable with coefficients $d_{i}=\sum_{j+k=i}{c_{j,k}}$.
The first ten terms in the period sequence are: 1, 0, 10, 42, 414, 3300, 29890, 275940, 2608270, 25305000

Since it is known that $J^0_M$ satisfies quantum differential equations, the period sequence also does.
Given sufficient entries in the power series of the period sequence, we may find the differential operator which annihilates the sequence.

The Picard--Fuchs operator for the regularised period sequence of  $M$ is given by
\begin{multline}-17727940t^9D^4 - 47452732t^8D^4 - 177279400t^9D^3 - 51239477t^7D^4\\
 -  400876912t^8D^3 - 620477900t^9D^2 - 28719434t^6D^4 - 363088702t^7D^3\\
 - 1218943172t^8D^2 - 886397000t^9D - 8782543t^5D^4 - 169273876t^6D^3\\
    - 958664473t^7D^2
     - 1562482112t^8D - 425470560t^9 - 1322684t^4D^4\\
      -
    42555106t^5D^3 - 384463114t^6D^2 - 1102964660t^7D - 696963120t^8\\
     -
    37187t^3D^4 - 5281118t^4D^3 - 80112855t^5D^2 - 392394560t^6D -
    456149412t^7\\
     + 13026t^2D^4 - 238966t^3D^3 - 7132816t^4D^2 -
    69331328t^5D - 148485888t^6\\
     + 995tD^4 - 11442t^2D^3 - 3879t^3D^2 -
    4318688t^4D - 22881836t^5 - 24D^4\\ - 2278tD^3
     + 16030t^2D^2 +
    146332t^3D - 928456t^4 + 24D^3 + 35tD^2 + 9600t^2D\\ + 76072t^3 +
    1920t^2\end{multline}
where $D=t\frac{d}{dt} $

This matches the Picard--Fuchs operator predicted by the mirror polytope \cite{Fanosearch}.

\appendix

\section{Quantum multiplication matrices}\label{sec:MMatrix}
\begin{landscape}

$M_p$
\[
\tiny
\arraycolsep=3pt
\left(\begin{array}{*{30}c}
 0&0&0&0&0&0&0&0
&0&0&0&0&0&0&0&0&0&0&0&0&0&0&0&0&q_{{1}}q_{{2}}&3\,q_{{1}}q_{{2}}&5\,q
_{{1}}q_{{2}}&0&0&2\,{q_{{1}}}^{2}q_{{2}}\\ 1&0&0&-q
_{{1}}&0&0&0&0&0&0&0&0&0&0&0&0&0&0&0&0&0&0&0&0&0&0&0&q_{{1}}q_{{2}}&3
\,q_{{1}}q_{{2}}&0\\ 0&0&0&2\,q_{{1}}&0&0&0&0&0&0&0&0
&0&0&0&0&0&0&0&0&0&0&0&0&0&0&0&q_{{1}}q_{{2}}&4\,q_{{1}}q_{{2}}&0
\\ 0&1&0&0&0&0&0&0&0&0&0&0&0&0&0&0&0&0&0&0&0&0&0&0&0
&0&0&0&0&q_{{1}}q_{{2}}\\ 0&0&1&0&0&0&-q_{{1}}&-q_{{
1}}&0&0&0&0&0&0&0&0&0&0&0&0&0&0&0&0&0&0&0&0&0&0\\ 0&0
&0&0&0&0&2\,q_{{1}}&2\,q_{{1}}&0&0&0&0&0&0&0&0&0&0&0&0&0&0&0&0&0&0&0&0
&0&q_{{1}}q_{{2}}\\ 0&0&0&1&0&0&0&0&0&0&-q_{{1}}&0&0
&0&0&0&0&0&0&0&0&0&0&0&0&0&0&0&0&0\\ 0&0&0&0&1&0&0&0
&0&0&q_{{1}}&0&0&0&0&0&0&0&0&0&0&0&0&0&0&0&0&0&0&0
\\ 0&0&0&0&0&1&0&0&0&0&-q_{{1}}&-q_{{1}}&-q_{{1}}&0&0
&0&0&0&0&0&0&0&0&0&0&0&0&0&0&0\\ 0&0&0&0&0&0&0&0&0&0
&q_{{1}}&2\,q_{{1}}&2\,q_{{1}}&0&0&0&0&0&0&0&0&0&0&0&0&0&0&0&0&0
\\ 0&0&0&0&0&0&1&0&0&0&0&0&0&0&0&0&0&0&0&0&0&0&0&0&0
&0&0&0&0&0\\ 0&0&0&0&0&0&0&1&0&0&0&0&0&0&0&-q_{{1}}&0
&0&0&0&0&0&0&0&0&0&0&0&0&0\\ 0&0&0&0&0&0&0&0&1&0&0&0
&0&0&0&q_{{1}}&0&0&0&0&0&0&0&0&0&0&0&0&0&0\\ 0&0&0&0
&0&0&0&0&0&1&0&0&0&0&0&-q_{{1}}&-q_{{1}}&-q_{{1}}&0&0&0&0&0&0&0&0&0&0&0
&0\\ 0&0&0&0&0&0&0&0&0&0&0&0&0&0&0&q_{{1}}&2\,q_{{1}
}&2\,q_{{1}}&0&0&0&0&0&0&0&0&0&0&0&0\\ 0&0&0&0&0&0&0
&0&0&0&0&1&0&0&0&0&0&0&0&0&0&0&0&0&0&0&0&0&0&0\\ 0&0
&0&0&0&0&0&0&0&0&0&0&1&0&0&0&0&0&0&0&-q_{{1}}&0&0&0&0&0&0&0&0&0
\\ 0&0&0&0&0&0&0&0&0&0&0&0&0&1&0&0&0&0&0&0&q_{{1}}&0
&0&0&0&0&0&0&0&0\\ 0&0&0&0&0&0&0&0&0&0&0&0&0&0&1&0&0
&0&0&0&-q_{{1}}&-q_{{1}}&-q_{{1}}&0&0&0&0&0&0&0\\ 0&0
&0&0&0&0&0&0&0&0&0&0&0&0&0&0&0&0&0&0&q_{{1}}&2\,q_{{1}}&2\,q_{{1}}&0&0
&0&0&0&0&0\\ 0&0&0&0&0&0&0&0&0&0&0&0&0&0&0&0&1&0&0&0
&0&0&0&0&0&0&0&0&0&0\\ 0&0&0&0&0&0&0&0&0&0&0&0&0&0&0
&0&0&1&0&0&0&0&0&0&4\,q_{{1}}&10\,q_{{1}}&10\,q_{{1}}&0&0&0
\\ 0&0&0&0&0&0&0&0&0&0&0&0&0&0&0&0&0&0&1&0&0&0&0&0&-
4\,q_{{1}}&-10\,q_{{1}}&-10\,q_{{1}}&0&0&0\\ 0&0&0&0
&0&0&0&0&0&0&0&0&0&0&0&0&0&0&0&1&0&0&0&0&2\,q_{{1}}&5\,q_{{1}}&5\,q_{{
1}}&0&0&0\\ 0&0&0&0&0&0&0&0&0&0&0&0&0&0&0&0&0&0&0&0&0
&1&0&0&0&0&0&10\,q_{{1}}&25\,q_{{1}}&0\\ 0&0&0&0&0&0
&0&0&0&0&0&0&0&0&0&0&0&0&0&0&0&0&1&0&0&0&0&-6\,q_{{1}}&-15\,q_{{1}}&0
\\ 0&0&0&0&0&0&0&0&0&0&0&0&0&0&0&0&0&0&0&0&0&0&0&1&0
&0&0&2\,q_{{1}}&5\,q_{{1}}&0\\ 0&0&0&0&0&0&0&0&0&0&0
&0&0&0&0&0&0&0&0&0&0&0&0&0&0&1&0&0&0&0\\ 0&0&0&0&0&0
&0&0&0&0&0&0&0&0&0&0&0&0&0&0&0&0&0&0&0&0&1&0&0&0\\ 0
&0&0&0&0&0&0&0&0&0&0&0&0&0&0&0&0&0&0&0&0&0&0&0&0&0&0&0&1&0  
\end{array}
\right)
\]

$M_\xi$
\[
\tiny
\begin{pmatrix}
0&0&0&0&0&0&0&0
&0&0&0&0&0&0&0&0&0&0&0&q_{{2}}&0&0&0&0&q_{{1}}q_{{2}}&3\,q_{{1}}q_{{2}
}&5\,q_{{1}}q_{{2}}&0&0&{q_{{1}}}^{2}q_{{2}}\\ 0&0&0
&0&0&0&0&0&0&0&0&0&0&0&0&0&0&0&0&0&0&0&0&q_{{2}}&0&0&0&q_{{1}}q_{{2}}&
3\,q_{{1}}q_{{2}}&0\\ 1&0&0&0&0&0&0&0&0&0&0&0&0&0&0&0
&0&0&0&0&0&0&0&0&0&0&0&q_{{1}}q_{{2}}&4\,q_{{1}}q_{{2}}&0
\\ 0&0&0&0&0&0&0&0&0&0&0&0&0&0&0&0&0&0&0&0&0&0&0&0&0
&0&q_{{2}}&0&0&q_{{1}}q_{{2}}\\ 0&1&0&0&0&0&0&0&0&0&0
&0&0&0&0&0&0&0&0&0&0&0&0&0&0&0&0&0&0&0\\ 0&0&1&0&0&0
&0&0&0&0&0&0&0&0&0&0&0&0&0&0&0&0&0&0&0&0&0&0&0&q_{{1}}q_{{2}}
\\ 0&0&0&0&0&0&0&0&0&0&0&0&0&0&0&0&0&0&0&0&0&0&0&0&0
&0&0&0&q_{{2}}&0\\ 0&0&0&1&0&0&0&0&0&0&0&0&0&0&0&0&0
&0&0&0&0&0&0&0&0&0&0&0&0&0\\ 0&0&0&0&1&0&0&0&0&0&0&0
&0&0&0&0&0&0&0&0&0&0&0&0&0&0&0&0&0&0\\ 0&0&0&0&0&1&0
&0&0&0&0&0&0&0&0&0&0&0&0&0&0&0&0&0&0&0&0&0&0&0\\ 0&0
&0&0&0&0&0&0&0&0&0&0&0&0&0&0&0&0&0&0&0&0&0&0&0&0&0&0&0&q_{{2}}
\\ 0&0&0&0&0&0&1&0&0&0&0&0&0&0&0&0&0&0&0&0&0&0&0&0&0
&0&0&0&0&0\\ 0&0&0&0&0&0&0&1&0&0&0&0&0&0&0&0&0&0&0&0
&0&0&0&0&0&0&0&0&0&0\\ 0&0&0&0&0&0&0&0&1&0&0&0&0&0&0
&0&0&0&0&0&0&0&0&0&0&0&0&0&0&0\\ 0&0&0&0&0&0&0&0&0&1
&0&0&0&0&0&0&0&0&0&0&0&0&0&0&0&0&0&0&0&0\\ 0&0&0&0&0
&0&0&0&0&0&1&0&0&0&0&0&0&0&0&0&0&0&0&0&0&0&0&0&0&0
\\ 0&0&0&0&0&0&0&0&0&0&0&1&0&0&0&0&0&0&0&0&0&0&0&0&0
&0&0&0&0&0\\ 0&0&0&0&0&0&0&0&0&0&0&0&1&0&0&0&0&0&0&0
&0&0&0&0&0&0&0&0&0&0\\ 0&0&0&0&0&0&0&0&0&0&0&0&0&1&0
&0&0&0&0&0&0&0&0&0&0&0&0&0&0&0\\ 0&0&0&0&0&0&0&0&0&0
&0&0&0&0&1&0&0&0&0&0&0&0&0&0&0&0&0&0&0&0\\ 0&0&0&0&0
&0&0&0&0&0&0&0&0&0&0&1&0&0&0&0&0&0&0&0&0&0&0&0&0&0
\\ 0&0&0&0&0&0&0&0&0&0&0&0&0&0&0&0&1&0&0&5&0&0&0&0&0
&0&0&0&0&0\\ 0&0&0&0&0&0&0&0&0&0&0&0&0&0&0&0&0&1&0&-
5&0&0&0&0&0&0&0&0&0&0\\ 0&0&0&0&0&0&0&0&0&0&0&0&0&0&0
&0&0&0&1&3&0&0&0&0&0&0&0&0&0&0\\ 0&0&0&0&0&0&0&0&0&0
&0&0&0&0&0&0&0&0&0&0&1&0&0&5&0&0&0&0&0&0\\ 0&0&0&0&0
&0&0&0&0&0&0&0&0&0&0&0&0&0&0&0&0&1&0&-5&0&0&0&0&0&0
\\ 0&0&0&0&0&0&0&0&0&0&0&0&0&0&0&0&0&0&0&0&0&0&1&3&0
&0&0&0&0&0\\ 0&0&0&0&0&0&0&0&0&0&0&0&0&0&0&0&0&0&0&0
&0&0&0&0&1&0&-5&0&0&0\\ 0&0&0&0&0&0&0&0&0&0&0&0&0&0&0
&0&0&0&0&0&0&0&0&0&0&1&3&0&0&0\\ 0&0&0&0&0&0&0&0&0&0
&0&0&0&0&0&0&0&0&0&0&0&0&0&0&0&0&0&1&3&0
\end{pmatrix}
\]
\end{landscape}

\bibliographystyle{plain}

\end{document}